\newtheorem{theorem}{Theorem}
\newtheorem{lemma}{Lemma}
\newtheorem{corollary}{Corollary}
\newcommand{\C}{\mathbb{C}}
\newcommand{\zb}{\overline{z}}
\newcommand{\D}{\Omega}
\newcommand{\Dc}{\overline{\Omega}}
\newcommand{\dbar}{\overline{\partial}}
\newcommand{\ep}{\varepsilon}
\newcommand{\ob}{\overline{\omega}}
\newcommand{\sumprime}{\sideset{}{'}\sum}
\title{Convex domains, Hankel operators, and maximal estimates}
\author{Mehmet \c{C}el\.ik}
\address[Mehmet \c{C}elik]{Texas A\&M University-Commerce, 
	Department of Mathematics, Commerce, TX 75429, USA}
\email{mehmet.celik@tamuc.edu}
\author{S\"{o}nmez \c{S}ahuto\u{g}lu}
\address[S\"{o}nmez \c{S}ahuto\u{g}lu]{ University of Toledo, 
	Department of Mathematics \& Statistics, Toledo, OH 43606, USA}
\email{sonmez.sahutoglu@utoledo.edu}
\author{Emil J. Straube}
\address[Emil J. Straube]{Texas A\&M University, Department of Mathematics, 
	College Station, TX, 77843, USA}
\email{straube@math.tamu.edu}
\subjclass[2010]{Primary 32W05; Secondary 47B35}
\keywords{$\dbar$-Neumann problem, Hankel operators, 
	convex domains, maximal estimates, compactness}
\date{February 24, 2019}
\begin{document}

\thanks{Supported in part by Qatar National Research Fund Grant 
	NPRP 7-511-1-98 and by the Erwin Schr\"{o}dinger International 
	Institute for Mathematics and Physics, workshop 
	\emph{Analysis and CR Geometry}, Dec. 2018.}

\begin{abstract}
Let $1\leq q\leq (n-1)$. We first show that a necessary condition for 
a Hankel operator on 
$(0,q-1)$-forms on a convex domain to be compact is that its symbol 
is holomorphic along $q$-dimensional analytic varieties in the boundary. 
Because maximal estimates (equivalently, a comparable eigenvalues 
condition on the Levi form of the boundary) turn out to be favorable 
for compactness of Hankel operators, this result then implies that 
on a convex domain, maximal estimates exclude analytic varieties 
from the boundary, except ones of top dimension $(n-1)$  
(and their subvarieties). Some of our techniques apply to general 
pseudoconvex domains to show that if the Levi form has comparable 
eigenvalues, or equivalently, if the domain admits maximal estimates, 
then compactness and subellipticity hold for forms at \emph{some} 
level $q$ if and only if they hold at \emph{all} levels. 
\end{abstract}

\maketitle

\section{Introduction and Results}

Let $\Omega$ be a smooth ($C^{\infty}$) bounded pseudoconvex domain 
in $\mathbb{C}^{n}$. Let $U$ be a  neighborhood of a point 
$z\in b\Omega$ small enough that there exist smooth vector fields 
$L_{1}, L_{2}, \ldots, L_{n}$ 
in $U$, of type $(1,0)$, pointwise orthonormal, so that $L_{1}, \ldots,L_{n-1}$ 
are tangential to $b\Omega$ at the boundary, and $L_{n}$ is the (complex) 
normal. We use the customary notation $\omega_{1},\omega_{2},\ldots,\omega_{n}$ 
to denote the $(1,0)$-forms dual to $L_{1},\ldots,L_{n}$.  For a boundary point 
$z\in b\Omega \cap U$, $L_{1}(z), \ldots, L_{n-1}(z)$ form an orthonormal basis 
for $T^{1,0}_{z}(b\Omega)$. The collection 
$(U, L_{1},\cdots,L_{n}, \omega_{1},\cdots, \omega_{n})$
is usually referred to as a special boundary chart and/or frame.
The Levi form of the boundary is defined via 
$[L,\overline{L}] = \mathcal{L}(L,\overline{L})T \mod T^{1,0}\oplus T^{0,1}$, 
where $T$ is the familiar `bad' direction inside the tangent space, 
purely imaginary, normalized and chosen so that $\mathcal{L}$ 
is positive semi-definite. 

Denote by $\lambda_{j}(z)$, $1\leq j\leq n-1$, the eigenvalues of the Levi form 
of $b\Omega$ at the point $z\in b\Omega$, in increasing order. Strictly speaking, 
we mean the eigenvalues of the matrix that represents the Levi form with respect 
to a basis $L_{1}, \ldots,L_{n-1}$ as above; as long as we insist on orthonormal bases, 
these eigenvalues do not depend on the basis chosen
\footnote{Even if we were to drop the requirement that the basis be orthonormal, 
the next condition would remain independent of the basis chosen, in view of a 
theorem of Ostrowski which relates eigenvalues of matrices of the form $M$ 
and $\overline{S}^{T}MS$; see for example \cite{HJ85}, Theorem 4.5.9.}. 
We say that \emph{the Levi form of $b\Omega$ satisfies a comparable 
eigenvalues condition at level $q$ in $U\cap b\Omega$}, if there exists a 
constant $C>0$ such that 
$C( \lambda_{1}(z)+ \cdots +\lambda_{n-1}(z)) 
\leq  \sum_{s=1}^{q}\lambda_{j_{s}}(z) 
\leq \lambda_{1}(z)+ \cdots +\lambda_{n-1}(z)$ 
for any $q$-tuple $(j_{1}, \ldots,j_q)$ and $z\in b\Omega$. That is, the sum 
of any $q$ eigenvalues is comparable to the trace. Note that the second 
inequality is trivially satisfied because $\Omega$ is pseudoconvex. This 
condition is easily seen to be equivalent to sums of $q$ eigenvalues being 
comparable.\footnote{Note that $\lambda_{1}+\cdots+\lambda_{n-1} = 
\left(\begin{array}{c}n-2\\ 
 q-1\end{array}\right)^{-1}\sideset{}{'}\sum_{|J|=q}(\lambda_{j_{1}}
+\cdots+\lambda_{j_{q}})$,
 where the summation is over strictly increasing multi-indices $J$. 
 Thus if the $q$-sums 
compare, the trace also compares to any $q$-sum. A similar observation for 
$(\lambda_{1}+\cdots+\lambda_{q+1})$ shows that if the comparable 
eigenvalues condition holds at level $q$, it also holds at level $(q+1)$.} 
We say that \emph{the Levi form of $b\Omega$ satisfies a 
comparable eigenvalues condition at level $q$} if every point $z\in b\Omega$ 
has a neighborhood $U$ so that the condition is satisfied in $U\cap b\Omega$. 
Because $b\Omega$ is compact, we may take the constant $C$ to be independent 
of $z\in b\Omega$.

The comparable eigenvalues condition is important because it is equivalent to 
an $L^{2}$ estimate in the $\overline{\partial}$-Neumann problem that is better 
than the usual estimate on a pseudoconvex domain. Namely, if $b\Omega$ 
satisfies a comparable eigenvalues condition at level $q$, we have the estimate
\begin{equation}\label{EqnMaximalEst}
\sum_{j=1}^{n}\|\overline{L_{j}}f\|^{2} + \sum_{j=1}^{n-1}\| L_{j}f\|^{2} 
\lesssim(\|\dbar f\|^{2}+\|\dbar^*f\|^{2}+\|f\|^{2}) 
\end{equation}
for any 
$f\in Dom(\dbar)\cap Dom(\dbar^*)\cap C^{\infty}_{(0,q)}(\overline{\Omega})$ 
that is supported in a special boundary chart. Here and throughout the paper, we 
employ the customary convention that $\lesssim$ indicates an estimate with a constant 
that is independent of all relevant quantities.
For proofs, see \cite{Derridj78}, Th\'{e}or\`{e}me 3.1 for $q=1$, 
and \cite{BenMoussa00}, Th\'{e}or\`{e}me 3.7 for $q>1$. (Since we work 
on pseudoconvex domains, the term $\|f\|^{2}$ on the right hand side is 
dominated by the others, and so will not be needed.) The first term 
in \eqref{EqnMaximalEst} is always dominated by the right hand side, in view of 
the Morrey-Kohn-H\"{o}rmander inequality, however, the second term is not in 
general. So the point of \eqref{EqnMaximalEst} is that all, not just the barred, 
complex tangential derivatives of $f$ are controlled by 
$\|\overline{\partial}f\|+\|\overline{\partial}^{*}f\|$. Such estimates are referred 
to as \emph{maximal estimates}. We refer the reader to the introduction 
of \cite{Koenig15} for an account of the genesis of this terminology, and the 
important role such estimates play in the theory of the 
$\overline{\partial}$-Neumann problem.
\smallskip 

Next we define the Hankel operators on $(0,q)$-forms for $0\leq q\leq n$ as 
follows. Let $K^2_{(0,q)}(\D)$ denote the set of square integrable $\dbar$-closed 
$(0,q)$-forms on $\D$ and $P_q:L^2_{(0,q)}(\D) \to K^2_{(0,q)}(\D)$ be the 
Bergman projection. The Hankel operator with symbol 
$\phi\in L^{\infty}(\D)$ is the operator 
$H^q_{\phi}:K^2_{(0,q)}(\D)\to L^2_{(0,q)}(\D)$, 
\begin{equation}\label{Hankel}
H^q_{\phi}f=\phi f-P_q(\phi f)
\end{equation}
for $f\in K^2_{(0,q)}(\D)$. When $\phi\in C^1(\Dc)$, 
Kohn's formula, $P_q=I-\dbar^*N_{q+1}\dbar$, implies that 
\begin{equation}\label{KohnHankel}
H^{q}_{\phi}f=\dbar^*N_{q+1}(\dbar \phi\wedge f)
\end{equation}
for  $f\in K^2_{(0,q)}(\D)$. Here, $N_{q+1}$ denotes the 
$\overline{\partial}$-Neumann operator on $(0,q+1)$-forms. 
In the following theorem, $A^2_{(0,q)}(\D)\subset K^{2}_{(0,q-1)}(\Omega)$ 
denotes the space of  $(0,q)$-forms with square integrable holomorphic  
coefficients, and $\mathbb{D}^{q}$ is the unit polydisc, i.e. the $q$-fold 
product of the unit disc in $\mathbb{C}^{q}$.

Our first result gives a necessary condition for compactness of Hankel 
operators; the case $q=1$ and $n=2$ is in \cite{ClosCelikSahutoglu}
(for symbols in $C(\overline{\Omega})$), 
the case $q=1$ but general $n$ is in \cite{CuckovicSahutoglu09}
(for symbols in $C^{\infty}(\overline{\Omega})$).

\begin{theorem}\label{ThmNonCompact}
Let $\D$ be a bounded convex domain in $\C^n$ for $n\geq 2$.   
Assume that $\phi\in C^1(\Dc)$ and there exists a holomorphic 
embedding $\psi:\mathbb{D}^q\to b\D$ for some $1\leq q\leq n-1$ 
such that $\phi\circ \psi$ is not holomorphic. Then $H^{q-1}_{\phi}$ 
is not compact on $A^2_{(0,q-1)}(\D)$ (and \emph{a fortiori} 
not on $K^{2}_{(0,q-1)}(\Omega)$).
\end{theorem}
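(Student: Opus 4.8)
The plan is to exploit the holomorphic disc $\psi:\mathbb{D}^q\to b\D$ to produce a sequence in $A^2_{(0,q-1)}(\D)$ that is bounded but has no convergent subsequence after applying $H^{q-1}_{\phi}$, thereby contradicting compactness. The starting observation is that since $\phi\circ\psi$ is not holomorphic, there is a point $p$ in the image disc and a complex tangential direction along $\psi(\mathbb{D}^q)$ in which $\dbar\phi$ does not vanish; after an affine change of coordinates (convexity is preserved) we may assume $p=0\in b\D$, that $\D$ lies in a half-space $\{\operatorname{Re} z_n<0\}$ with $0$ on the boundary, and that the polydisc $\psi(\mathbb{D}^q)$ is (a piece of) $\{z_{q+1}=\cdots=z_n=0\}\cap b\D$, sitting in the boundary near $0$. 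The first key step is a normal-families / scaling argument: dilate $\D$ by $z\mapsto \delta^{-1}z$ (or an anisotropic dilation that respects the flat directions), so that in the limit one gets a product-type domain $\mathbb{C}^q\times D'$ containing the affine subspace $\mathbb{C}^q\times\{0\}$ in its boundary, and transplant test forms accordingly.

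The concrete test forms I would use are built by the standard device of concentrating holomorphic $L^2$ functions near the flat piece: take $f_k=g_k\,d\zb_1\wedge\cdots\wedge d\zb_{q-1}$ (if $q\geq 2$; for $q=1$ just a function) where $g_k(z)$ depends holomorphically on $z$, is essentially a normalized bump in the $z_1,\dots,z_q$ variables concentrating mass near $0$ while staying holomorphic, and carries a factor like $e^{k z_n}$ (or $(1+z_n)^{-N_k}$ suitably normalized) that forces $L^2$ concentration toward $\{{\rm Re}\,z_n=0\}$. These are genuine elements of $A^2_{(0,q-1)}(\D)$ because the coefficients are holomorphic; normalize $\|f_k\|=1$. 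Since the $g_k$ concentrate near the boundary point $0$ and spread in the flat directions, $f_k\to 0$ weakly. For such forms one computes $\|H^{q-1}_{\phi}f_k\|$ from below: the essential point is that $\phi f_k-P_{q-1}(\phi f_k)$ has norm bounded below by the size of the component of $\dbar\phi$ in the tangential-but-non-barred directions along the flat piece, evaluated in the limit — precisely the component that is nonzero because $\phi\circ\psi$ is not holomorphic. One shows $\liminf_k\|H^{q-1}_{\phi}f_k\|>0$ by pairing against an explicit auxiliary holomorphic form that detects $\dbar\phi\wedge f_k$, using that on the model domain the Bergman projection cannot kill this component.

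The main obstacle, and where the convexity hypothesis is really used, is controlling the Bergman projection term $P_{q-1}(\phi f_k)$: a priori $P_{q-1}(\phi f_k)$ could be large and nearly cancel $\phi f_k$ even though $\phi f_k$ is not itself $\dbar$-closed. Convexity enters twice here: it guarantees (via supporting hyperplanes) that the flat piece $\psi(\mathbb{D}^q)$ extends to a genuine affine analytic disc in $b\D$ on which one has good control of the geometry, and it makes the scaling limit a convex (indeed linearly convex, product-like) model domain on which $L^2$ and Bergman theory are explicit enough to carry out the lower bound. I would handle this by a duality argument: estimate $\|H^{q-1}_{\phi}f_k\|=\sup\{|\langle \phi f_k, h\rangle| : h\in L^2_{(0,q-1)},\ \|h\|\leq 1,\ h\perp K^2_{(0,q-1)}\}$ — equivalently test $\phi f_k$ against forms of the type $\dbar^*(\text{something})$ — and produce an explicit such $h$ supported near $0$, built from $\dbar$ of a bump times the conjugate of the offending direction, for which $\langle\phi f_k,h\rangle$ stays bounded away from $0$ while $\langle P_{q-1}(\phi f_k),h\rangle=0$. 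Pushing the estimates through the scaling limit, together with the fact that the $f_k$ go weakly to zero, yields that $H^{q-1}_{\phi}$ maps a weakly-null bounded sequence to a sequence not converging to $0$ in norm, so it is not compact; the statement for $K^2_{(0,q-1)}(\D)$ follows since $A^2_{(0,q-1)}(\D)\subset K^2_{(0,q-1)}(\D)$ and restriction of a compact operator to a closed subspace is compact.
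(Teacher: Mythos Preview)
Your outline has a genuine gap at the heart of the argument. You propose test forms $f_k$ whose holomorphic coefficients concentrate their $L^{2}$ mass near the single boundary point $0$ (both a ``normalized bump'' in $z_1,\dots,z_q$ and a factor like $e^{kz_n}$ pushing mass to the boundary). But concentration at a point kills the Hankel operator rather than making it large: since $\phi\in C^{1}(\Dc)$ and $H^{q-1}_{\phi}=H^{q-1}_{\phi-\phi(0)}$, one has
\[
\|H^{q-1}_{\phi}f_k\|\le\|(\phi-\phi(0))f_k\|\longrightarrow 0
\]
whenever the mass of $f_k$ localizes at $0$. So no sequence of this shape can witness non-compactness; you need forms that spread along the disc, not collapse to a point. (Relatedly, there is no ``holomorphic bump'' in the tangential variables: a holomorphic function cannot have compact support in $z_1,\dots,z_q$.) The scaling/model-domain step and the duality step are also only sketched; in particular you never exhibit an explicit $h\perp K^{2}_{(0,q-1)}$ for which $\langle\phi f_k,h\rangle$ stays bounded away from zero, and controlling the Bergman projection under dilation on a general convex domain is not automatic.

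The paper's proof avoids all of this by a different mechanism. After flattening so that $(2\mathbb{D})^{q}\times\{0\}\subset b\D$, one invokes a result from Fu--Straube: there is a bounded sequence $\{F_j\}\subset A^{2}(\D)$ whose restrictions $f_j(z'')=F_j(0,z'')$ to the slice $\D_2$ admit no convergent subsequence in $A^{2}(\D_2)$. The non-holomorphicity of $\phi$ is then used not via a lower bound on $\|H^{q-1}_{\phi}\alpha\|$ but through an identity: dividing by $\partial\phi/\partial\zb_1$ one writes a cutoff in $z'$ as $\gamma\cdot\partial\phi/\partial\zb_1$, so the mean-value formula becomes
\[
c\,F_j(0,z'')=\int_{\mathbb{D}^q}\langle\dbar\phi\wedge\alpha_j,\overline{\gamma}\,d\zb_1\wedge\cdots\wedge d\zb_q\rangle\,dV(z'),
\qquad \alpha_j=F_j\,d\zb_2\wedge\cdots\wedge d\zb_q.
\]
The key algebraic observation $\dbar\phi\wedge\alpha_j=\dbar(H^{q-1}_{\phi}\alpha_j)$ and an integration by parts in $z'$ then bound $\|f_j-f_k\|_{A^{2}(\D_2)}$ by (a constant times) $\|H^{q-1}_{\phi}(\alpha_j-\alpha_k)\|$. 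Compactness of $H^{q-1}_{\phi}$ would make the right side Cauchy along a subsequence, forcing $\{f_j\}$ to have a convergent subsequence --- a contradiction. In short, the paper controls the \emph{restriction} by the Hankel operator and uses non-compactness of the restriction; it never tries to bound $\|H^{q-1}_{\phi}f_k\|$ from below directly.
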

That is, for $H^{q-1}_{\phi}$ to be compact (even on $A^{2}_{(0,q-1)}(\Omega)$) 
it is necessary that the symbol $\phi$ is holomorphic along (the regular part of) 
$q$-dimensional, and thus higher dimensional, varieties in the boundary. 
Since $\Omega$ is convex, such varieties are 
necessarily contained in affine varieties, see \cite{FuStraube98}, 
Theorem 1.1 and section 2, and \cite{CuckovicSahutoglu09}, Lemma 2.  
The proof of Theorem \ref{ThmNonCompact}, given in section 2, 
combines ideas from \cite{FuStraube98} and 
\cite{ClosCelikSahutoglu} in a fairly straightforward way.

In view of the 'if and only if' nature of the results in \cite{FuStraube98}, one 
might expect that the converse of Theorem \ref{ThmNonCompact} also holds. 
This is known for (convex) domains in $\mathbb{C}^{2}$ 
(\cite{CuckovicSahutoglu09}, Theorem 3), and we can verify it in many cases, 
but the general case (for convex domains in dimension $n\geq 3$) remains open.

\smallskip

Suppose we have an estimate whose right hand side depends only on 
$\|\overline{\partial}f\|$ and $\|\overline{\partial}^{*}f\|$ (possibly modulo 
`weak' terms), as in \eqref{EqnMaximalEst}, at the level of $(q+1)$-forms. 
In addition to \eqref{EqnMaximalEst}, examples include compactness 
estimate (\eqref{comp} below) and subelliptic estimate (\eqref{subelliptic} 
below). In order to derive an analogous estimate for $q$-forms, 
it is natural to take a $q$-form $f$ and produce $(q+1)$-forms 
$f^{k}:=f\wedge\overline{\omega_{k}}$, $k=1,\ldots,(n-1)$ (say $f$ is 
supported in a local frame), control the relevant norm of $f$ by 
those of the $f^{k}$, apply the known estimate to the $(q+1)$-forms 
$f^{k}$, and finally control $\|\overline{\partial}(f^{k})\|$ and 
$\|\overline{\partial}^{*}(f^{k})\|$ by $\|\overline{\partial}f\|$ and 
$\|\overline{\partial}^{*}f\|$. This is no problem for 
 $\overline{\partial}(f^{k})=\overline{\partial}f\wedge\overline{\omega_{k}} 
 + (-1)^{q}f\wedge\overline{\partial}(\overline{\omega_{k}})$; it is controlled 
 by $\|\overline{\partial}f\|+\|f\|$, hence by 
 $\|\overline{\partial}f\|+\|\overline{\partial}^{*}f\|$. The form 
 $\overline{\partial}^{*}(f^{k})$ takes more care. First, if $f$ is smooth 
 enough (say in $C^{\infty}_{(0,q)}(\overline{\Omega})$ for simplicity), 
 then $f^{k}=f\wedge\overline{\omega_{k}}\;$ is in 
 $dom(\overline{\partial}^{*})$ if $f$ is.
 Indeed, since the normal components of both $f$ and $\overline{\omega_{k}}$ 
 vanish on the boundary, so does that of $f\wedge\overline{\omega_{k}}$. 
 Computation of $\overline{\partial}^{*}(f^{k})$ then reveals that to 
 control $\|\overline{\partial}^{*}(f^{k})\|$, one needs not only 
 $\|\overline{\partial}^{*}f\|$ and $\|f\|$, but also $\|L_{k}f\|$ 
(see \eqref{Eqn11} in section \ref{SectionAppendix} below). 
So in order for the above scheme to work, the latter term needs to be controlled 
by $\|\overline{\partial}f\|+\|\overline{\partial}^{*}f\|$. This, however, is precisely 
what  the condition of maximal estimates ensures. Theorems \ref{ThmHankel} 
and \ref{ThmComp} below take advantage of this observation.

\begin{theorem}\label{ThmHankel}
Let $\D$ be a smooth bounded convex domain in $\C^n$, $n\geq 2$ and 
$1\leq q\leq n-1$. Assume that the Levi form of $b\Omega$ satisfies a 
comparable eigenvalues condition at level $q$. Let $\phi \in C^1(\Dc)$ 
such that $\phi\circ \psi$ is holomorphic for every holomorphic 
embedding $\psi:\mathbb{D}^{n-1}\to b\D$. Then the Hankel operator 
$H^{q-1}_{\phi}:K^2_{(0,q-1)}(\D)\to L^2_{(0,q-1)}(\D) $ is compact. 
\end{theorem}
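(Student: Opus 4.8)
The plan is to reduce compactness of $H^{q-1}_\phi$ at level $(q-1)$ to compactness of the Hankel operator at level $(n-1)$, where the hypothesis on $\phi$ (holomorphy along top-dimensional varieties in the boundary) makes the situation most favorable, and then to exploit the comparable eigenvalues condition together with the mechanism sketched before Theorem \ref{ThmHankel}. More precisely, compactness of $H^{q-1}_\phi$ on $K^2_{(0,q-1)}(\D)$ is, via Kohn's formula \eqref{KohnHankel}, controlled by a compactness estimate for $\dbar^*N_q$ applied to forms of the type $\dbar\phi\wedge f$, $f\in K^2_{(0,q-1)}(\D)$; such forms are $\dbar$-closed $(0,q)$-forms orthogonal to $K^2_{(0,q)}(\D)$. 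So it suffices to establish a compactness estimate
\begin{equation}\label{compHankelPlan}
\|\dbar^* N_q u\|^2 \lesssim \ep \|u\|^2 + C_\ep\|u\|^2_{-1}
\end{equation}
for $u$ in the relevant subspace, for every $\ep>0$; equivalently, it suffices that the Hankel operator at level $q-1$ satisfies the usual $\ep$-$C_\ep$ compactness estimate. The standard route is: on a convex domain, the hypothesis that $\phi$ is holomorphic along every $(n-1)$-dimensional variety in $b\D$ (together with the structure of such varieties, which by \cite{FuStraube98} and \cite{CuckovicSahutoglu09} are affine) yields compactness of $H^{n-1}_\phi$ — indeed at the top level the complement of the set where the Levi form degenerates maximally behaves well, and $\phi$ being holomorphic there kills the obstruction. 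So I would first dispatch the case $q-1 = n-1$ directly, or rather observe that our goal is to transfer compactness \emph{down} from the top level to level $q-1$.

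The transfer is where the comparable eigenvalues condition enters, exactly as described in the paragraph preceding the theorem. Given a $(0,q-1)$-form $f$ supported in a special boundary chart (a partition of unity reduces to this case, at the cost of commutator/error terms that are lower order and hence harmless for compactness), form the $(0,q)$-forms $f^k := f\wedge\overline{\omega_k}$, $k=1,\ldots,n-1$. One controls $\|f\|$ (hence $\|\phi f - P_{q-1}(\phi f)\|$, up to handling the difference between $\phi f \wedge \overline{\omega_k}$ and $(\phi f)^k$, which only involves $\phi$ times $f$ and is fine) by $\sum_k\|f^k\|$, applies the compactness estimate known at level $q$ to each $f^k$, and then must bound $\|\dbar^*(f^k)\|$ and $\|\dbar(f^k)\|$ by $\|\dbar f\| + \|\dbar^* f\|$ modulo weak terms. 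As noted, $\dbar(f^k) = \dbar f\wedge\overline{\omega_k} \pm f\wedge\dbar(\overline{\omega_k})$ is immediately controlled by $\|\dbar f\| + \|f\|$. For $\dbar^*(f^k)$, the computation \eqref{Eqn11} in the appendix shows the extra term is (a constant times) $\|L_k f\|$; and $\sum_k\|L_k f\|^2$ is exactly the tangential part of the left side of the maximal estimate \eqref{EqnMaximalEst}, which the comparable eigenvalues hypothesis at level $q$ provides (after first establishing that the estimate of \cite{BenMoussa00} persists at level $q-1$, i.e. that maximal estimates, like the comparable eigenvalues condition, propagate up and down — but here we only need it at level $q$ for the $(q)$-forms $f^k$, which is the hypothesis). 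Actually one must be slightly careful: the maximal estimate is applied to $(0,q)$-forms, and $f^k$ is a $(0,q)$-form, but we want to estimate $\|L_k f\|$ for $f$ a $(0,q-1)$-form; this is handled by noting $\|L_k f\| \lesssim \|L_k f^k\| + \|f\| + \|[L_k,\text{wedge with }\overline{\omega_k}]\cdots\|$, i.e. the tangential derivative of $f$ is recovered from that of $f^k$ modulo zero-order terms, so the maximal estimate at level $q$ applied to the $f^k$ closes the loop.

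Assembling these: $\|H^{q-1}_\phi f\|^2 \lesssim \sum_k \|H^q_\phi f^k\|^2 + (\text{error}) \lesssim \ep\sum_k\|f^k\|^2 + C_\ep\sum_k\|f^k\|^2_{-1} + (\text{error}) \lesssim \ep\|f\|^2 + C_\ep\|f\|^2_{-1}$, where the errors are from commutators with the cutoff and from the passage between $f$ and the $f^k$, all of which are bounded by a small multiple of $\|\dbar f\|^2 + \|\dbar^* f\|^2 + \|f\|^2$ plus a large multiple of a weak norm — precisely the form of a compactness estimate, which by the standard characterization (Kohn–Nirenberg-type, or via the equivalence of compactness estimates with compactness of $N_q$ and of the Hankel operator) yields compactness of $H^{q-1}_\phi$. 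The main obstacle, I expect, is the bookkeeping for the boundary/normal components: ensuring $f^k\in\mathrm{dom}(\dbar^*)$ (handled by the normal-component vanishing remark in the excerpt) and tracking all commutator terms so that each genuinely lands in the ``$\ep\cdot\text{good} + C_\ep\cdot\text{weak}$'' bin rather than contributing an uncontrolled $O(1)$ term; this is where care is needed, though conceptually each such term is lower order. A secondary point requiring justification is the base case at level $q = n-1$ (or the claim that compactness of the top-level Hankel operator follows from holomorphy of $\phi$ along affine $(n-1)$-varieties in $b\D$ on a convex domain), which should follow from the arguments of \cite{CuckovicSahutoglu09} combined with \cite{FuStraube98}.
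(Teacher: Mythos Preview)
Your proposal has the right intuition about the role of maximal estimates (controlling $\|\dbar^{*}(u\wedge\overline{\omega_k})\|$ via $\|L_k u\|$), but the object to which you apply the wedge--with--$\overline{\omega}$ mechanism is wrong, and this is a genuine gap rather than bookkeeping. You propose to wedge $f\in K^{2}_{(0,q-1)}(\Omega)$ with $\overline{\omega_k}$ and then invoke compactness of $H^{q}_{\phi}$ on $f^{k}=f\wedge\overline{\omega_k}$. Three things fail here. First, a nonzero $\dbar$--closed form cannot be supported in a boundary chart, so your ``localize $f$ via a partition of unity'' step is not available; the error from cutting off $f$ is $\dbar(\chi f)=\dbar\chi\wedge f$, which is controlled only by $\|f\|$, not by any weak norm, and hence is \emph{not} lower order for the estimate you seek. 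Second, $f^{k}$ is not $\dbar$--closed (since $\dbar\overline{\omega_k}\neq 0$ in general), so $H^{q}_{\phi}f^{k}$ is not defined, and in any case there is no obvious relation between $(I-P_{q-1})(\phi f)$ and $(I-P_{q})(\phi f\wedge\overline{\omega_k})$---the Bergman projections at different levels do not interact with wedging in the way your inequality $\|H^{q-1}_{\phi}f\|^{2}\lesssim\sum_{k}\|H^{q}_{\phi}f^{k}\|^{2}+\text{(error)}$ would require. Third, your induction needs a base case (compactness of a Hankel operator at the top level), which you defer to a citation without argument; this is precisely the nontrivial input.

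The paper's proof avoids all of this by wedging a different object. Starting from $\|H^{q-1}_{\phi}f\|^{2}=\langle N_{q}(\dbar\phi\wedge f),\dbar\phi\wedge f\rangle$, one localizes $N_{q}(\dbar\phi\wedge f)$---a $(0,q)$--form in $dom(\dbar)\cap dom(\dbar^{*})$, for which cutting off is harmless---rather than $f$. Let $A$ be the union of the $(n-1)$--dimensional varieties in $b\Omega$; the hypothesis on $\phi$ says $(\dbar\phi)_{Tan}$ vanishes on $\overline{A}$. Near $\overline{A}$, the tangential part of $\dbar\phi$ is small and the normal part has a Sobolev--$1$ gain, so that piece of the pairing is $\leq\varepsilon\|f\|^{2}+C_{\varepsilon}\|f\|_{-1}^{2}$ directly. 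Away from $\overline{A}$ (in a special boundary chart), set $u_{s}=\big(\chi_{s}N_{q}(\dbar\phi\wedge f)\big)_{Tan}$ and wedge $u_{s}$ with $\overline{\omega_I}$, $|I|=n-1-q$, $n\notin I$, to obtain $(0,n-1)$--forms. The key external input is \emph{not} compactness of a higher--level Hankel operator but the fact (from \cite{CelikStraube09}, using convexity and \cite{FuStraube98}) that a cutoff supported in $\overline{\Omega}\setminus\overline{A}$ is a compactness multiplier for $(0,n-1)$--forms: this gives $\|u_{s}\wedge\overline{\omega_I}\|\leq\varepsilon\big(\|\dbar(u_{s}\wedge\overline{\omega_I})\|+\|\dbar^{*}(u_{s}\wedge\overline{\omega_I})\|\big)+C_{\varepsilon}\|u_{s}\wedge\overline{\omega_I}\|_{-1}$. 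Now the maximal estimate at level $q$ (applied to the $(0,q)$--form $u_{s}$, not to $f$) controls $\|\dbar^{*}(u_{s}\wedge\overline{\omega_I})\|$ by $\|\dbar u_{s}\|+\|\dbar^{*}u_{s}\|\lesssim\|f\|$, and the estimate closes. In short: the wedge trick is applied to $N_{q}(\dbar\phi\wedge f)$, the top--level input is a compactness--multiplier estimate rather than a Hankel compactness, and the split near/away from $\overline{A}$ is where the hypothesis on $\phi$ actually enters.
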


Note that the symbol $\phi$ is assumed holomorphic only on $(n-1)$-dimensional 
varieties, while the Hankel operator is on $(0,q-1)$-forms. Combined with 
Theorem \ref{ThmNonCompact}, this `discrepancy' leads to the following corollary. 
Its gist is that on convex domains, varieties in the boundary, apart from the ones 
in top dimension, are obstructions to maximal estimates (equivalently, to 
comparable eigenvalues conditions). 

We call the image of an embedding $\psi$ as in Theorem \ref{ThmHankel} an $(n-1)$ dimensional analytic polydisc.
\begin{corollary}\label{Cor1}
Let $\D$ be a smooth bounded convex 
domain in $\C^n$, $n\geq 2$ and $1\leq q\leq n-1$. Denote by
$A$ the union of the $(n-1)$-dimensional analytic polydiscs in $b\D$.
Assume that $\D$ satisfies maximal estimates for $(0,q)$-forms. Then 
$b\D\setminus \overline{A}$ contains no
$q$-dimensional analytic varieties. 
\end{corollary}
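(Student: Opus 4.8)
The plan is to derive Corollary~\ref{Cor1} by a contrapositive argument that plays Theorem~\ref{ThmNonCompact} against Theorem~\ref{ThmHankel}. Suppose, for contradiction, that $\Omega$ satisfies maximal estimates for $(0,q)$-forms (equivalently, the Levi form satisfies a comparable eigenvalues condition at level $q$) but that $b\Omega\setminus\overline{A}$ nonetheless contains a $q$-dimensional analytic variety $V$. Since $\Omega$ is convex, a theorem of Fu--Straube (\cite{FuStraube98}, Theorem~1.1 and section~2; cf.\ \cite{CuckovicSahutoglu09}, Lemma~2) shows that $V$ is contained in an affine analytic variety of the same dimension sitting in $b\Omega$, so after an affine change of coordinates we may assume $V$ contains (the image of) a holomorphic embedding $\psi:\mathbb{D}^q\to b\Omega$ whose closure still avoids $\overline{A}$; shrinking $\mathbb D^q$ if necessary, we may take $\overline{\psi(\mathbb D^q)}\cap\overline A=\emptyset$.

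Next I would construct a symbol $\phi\in C^1(\overline\Omega)$ that is holomorphic along every $(n-1)$-dimensional analytic polydisc in $b\Omega$ but is \emph{not} holomorphic along $\psi$. This is where the dimension gap between the hypotheses of the two theorems is exploited. Because $\overline{\psi(\mathbb D^q)}$ and $\overline A$ are disjoint compact sets, a smooth cutoff $\chi$ supported near $\overline{\psi(\mathbb D^q)}$ and identically $1$ on a neighborhood of $\psi(\mathbb D^q)$ can be chosen to vanish on a neighborhood of $\overline A$. Taking $\phi=\chi\cdot\zb_j$ for a suitable coordinate direction $z_j$ along which $\psi$ genuinely varies (so that $\phi\circ\psi=\zb_j\circ\psi$ is not holomorphic on $\mathbb D^q$), we obtain a symbol satisfying the hypothesis of Theorem~\ref{ThmHankel}: $\phi$ vanishes identically near $\overline A$, hence is trivially holomorphic along every $(n-1)$-dimensional polydisc in $b\Omega$. (One should check that every such polydisc lies in $A$ and hence in the region where $\phi\equiv 0$; this is immediate from the definition of $A$.)

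With this $\phi$ in hand, Theorem~\ref{ThmHankel} applies and gives that $H^{q-1}_\phi$ is compact on $K^2_{(0,q-1)}(\Omega)$, hence in particular on $A^2_{(0,q-1)}(\Omega)$. On the other hand, $\psi:\mathbb D^q\to b\Omega$ is a holomorphic embedding with $\phi\circ\psi$ not holomorphic, so Theorem~\ref{ThmNonCompact} asserts that $H^{q-1}_\phi$ is \emph{not} compact on $A^2_{(0,q-1)}(\Omega)$. This contradiction establishes the corollary. The main obstacle is the second step: one must ensure that the chosen $\phi$ really is holomorphic along \emph{all} $(n-1)$-dimensional polydiscs in $b\Omega$, which forces the support of $\chi$ to stay off $\overline A$; the disjointness $\overline{\psi(\mathbb D^q)}\cap\overline A=\emptyset$ is exactly what makes this possible, and it is the reason the hypothesis is stated as ``$b\Omega\setminus\overline A$ contains no $q$-dimensional variety'' rather than ``$b\Omega\setminus A$.'' A minor secondary point is the reduction to an affine $\psi$ with domain a polydisc rather than an arbitrary $q$-dimensional variety, which is handled by the cited Fu--Straube structure result.
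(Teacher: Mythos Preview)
Your argument is correct and matches the paper's own proof: argue by contradiction, shrink the variety so its closure misses $\overline{A}$, and build a symbol $\phi$ that vanishes near $\overline{A}$ (hence satisfies the hypothesis of Theorem~\ref{ThmHankel}) but fails to be holomorphic along the $q$-dimensional piece (so Theorem~\ref{ThmNonCompact} applies). The only cosmetic differences are that you give an explicit choice $\phi=\chi\cdot\overline{z}_j$ where the paper merely asserts existence, and you invoke the Fu--Straube affine structure result to produce the polydisc embedding, whereas it suffices simply to pass to a smooth point of $V$.
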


Convex domains in $\mathbb{C}^{2}$ (where maximal estimates hold 
trivially on $(0,1)$-forms) show that the requirement in the corollary 
that the varieties be outside $\overline{A}$ cannot be dropped. We also note that
$(n-1)$--dimensional polydiscs in $b\Omega$ are open subsets of a complex hyperplane.
Indeed, the argument in \cite{FuStraube98}, section 2 (see also \cite{CuckovicSahutoglu09}, 
Lemma 2) shows that if the supporting real hyperplane to $\Omega$ at a point is $\{x_{n}=0\}$
in suitable coordinates, then the embedding $\psi$ maps into the complex hyperplane $\{z_{n}=0\}$.

\smallskip

A portion of the technique in the proof of Theorem \ref{ThmHankel} 
leads to an interesting percolation phenomenon for compactness and 
subellipticity in the $\overline{\partial}$-Neumann problem on domains 
with maximal estimates. We first recall these notions.

\emph{The $\overline{\partial}$-Neumann problem is said to satisfy a
compactness estimate} for $(0,q)$-forms if the following holds: 
for every $\varepsilon >0$, there exists a 
constant $C_{\varepsilon}$ such that
\begin{equation}\label{comp}
\|f\|^{2} \leq \varepsilon\left(\|\overline{\partial}f\|^{2} 
+ \|\overline{\partial}^{*}f\|^{2}\right) 
+ C_{\varepsilon}\|f\|_{-1}^{2} \,,\, f\in dom(\overline{\partial}) 
\cap dom(\overline{\partial}^{*}) \subset L^{2}_{(0,q)}(\Omega)\,.
\end{equation}
Here, $\|f\|_{-1}$ denotes the Sobolev-$(-1)$ norm. 
The $\overline{\partial}$-Neumann problem is said to be subelliptic for 
$(0,q)$-forms if there exists $\varepsilon >0$ and a constant $C$ such that
\begin{equation}\label{subelliptic}
 \|f\|_{\varepsilon}^{2} \leq C\left(\|\overline{\partial}f\|^{2} 
 + \|\overline{\partial}^{*}f\|^{2}\right) \,,\, f\in dom(\overline{\partial})
 \cap dom(\overline{\partial}^{*})\subset L^{2}_{(0,q)}(\Omega) \,.
\end{equation}
Again, the subscript $\varepsilon$ denotes the Sobolev-$\varepsilon$ norm. 
We say that \emph{the $\overline{\partial}$-Neumann problem is subelliptic 
of order $\varepsilon$}. The relevance of estimates \eqref{comp} and 
\eqref{subelliptic} stems from their equivalence to compactness and 
subellipticity, respectively, of the $\overline{\partial}$-Neumann operator 
$N_{q}$ (\cite{StraubeBook}, \cite{KohnNirenberg65}, \cite{D'AngeloKohn99}). 

Compactness and subellipticity in the $\overline{\partial}$-Neumann problem
are known to percolate up the $\overline{\partial}$ complex (\cite{StraubeBook}, 
Proposition 4.5 and the remark following its proof, \cite{McNeal05}); the point of 
Theorem \ref{ThmComp} is that they percolate \emph{down} to level $q$ 
when the Levi form of $b\Omega$ satisfies a comparable eigenvalues 
condition at level $q$ (equivalently: when there are maximal estimates 
for $(0,q)$-forms). 

\begin{theorem}\label{ThmComp}
Let $\D$ be a smooth bounded pseudoconvex  domain in $\C^n$ for  
$n\geq 2$. Assume  that the Levi form of $b\Omega$ 
satisfies a comparable eigenvalues condition at level $q$ for some $q$,
$1\leq q\leq n-1$. Then  

(i) The $\overline{\partial}$-Neumann problem satisfies compactness 
estimate for $(0,q)$-forms if and only if it satisfies such estimate 
for $(0,n-1)$-forms. 

(ii) The $\overline{\partial}$-Neumann problem is subelliptic of order 
$\varepsilon$ for $(0,q)$-forms if and only if it is subelliptic of order 
$\varepsilon$ for $(0,n-1)$-forms, $0<\varepsilon\leq 1/2$.
\end{theorem}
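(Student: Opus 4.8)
The plan is to exploit the observation spelled out in the paragraph preceding Theorem \ref{ThmHankel}, namely that maximal estimates at level $q$ let us pass estimates between level $q$ and level $(n-1)$ (and hence, by the footnote remark, all intermediate levels). Since compactness and subellipticity are already known to percolate \emph{up} the $\overline{\partial}$-complex, only the ``down'' direction requires proof: assuming the relevant estimate (\eqref{comp} or \eqref{subelliptic}) holds at level $(n-1)$, and assuming the comparable eigenvalues condition at level $q$, I would derive the corresponding estimate at level $q$. By a partition of unity subordinate to a cover of $b\Omega$ by special boundary charts (together with an interior piece, handled by standard elliptic theory), it suffices to prove the estimate for $f\in dom(\overline{\partial})\cap dom(\overline{\partial}^{*})\cap C^{\infty}_{(0,q)}(\overline{\Omega})$ supported in a single special boundary chart $(U,L_{1},\dots,L_{n},\omega_{1},\dots,\omega_{n})$; the reduction to smooth forms is the usual density argument (the $L_{j}$-graph-norm completion), and the commutator/cutoff errors from the partition of unity are of lower order and absorbed.

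The core construction: given such $f$, set $f^{k}:=f\wedge\overline{\omega_{k}}$ for $k=1,\dots,n-1$, a $(0,q+1)$-form supported in $U$. As noted in the excerpt, $f^{k}\in dom(\overline{\partial}^{*})$ because the normal components of $f$ and of $\overline{\omega_{k}}$ both vanish on $b\Omega$. One has the pointwise lower bound $\sum_{k=1}^{n-1}|f^{k}|^{2}\gtrsim |f|^{2}$ (up to a combinatorial constant, since $f$ has only tangential coefficients), so $\|f\|^{2}\lesssim\sum_{k}\|f^{k}\|^{2}$ and likewise $\|f\|_{-1}^{2}\lesssim\sum_{k}\|f^{k}\|_{-1}^{2}$, respectively $\|f\|_{\varepsilon}^{2}\lesssim\sum_{k}\|f^{k}\|_{\varepsilon}^{2}$ (the last using that multiplication by the smooth coefficients of $\overline{\omega_{k}}$ and reconstruction of $f$ from the $f^{k}$ are bounded on Sobolev spaces up to commutator terms of order $\varepsilon-1$, absorbed on the left). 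Now apply the level-$(n-1)$ estimate to each $f^{k}$. It remains to bound $\|\overline{\partial}(f^{k})\|$ and $\|\overline{\partial}^{*}(f^{k})\|$ by the right-hand side at level $q$. For $\overline{\partial}(f^{k})=\overline{\partial}f\wedge\overline{\omega_{k}}+(-1)^{q}f\wedge\overline{\partial}(\overline{\omega_{k}})$ this is immediate: $\|\overline{\partial}(f^{k})\|\lesssim\|\overline{\partial}f\|+\|f\|\lesssim\|\overline{\partial}f\|+\|\overline{\partial}^{*}f\|$ (the last step being the trivial pseudoconvex estimate, or just keeping $\|f\|$ if one prefers the weak-term version). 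For $\overline{\partial}^{*}(f^{k})$ I would use the identity referenced as \eqref{Eqn11} in the appendix: $\overline{\partial}^{*}(f\wedge\overline{\omega_{k}})$ is, up to zeroth-order terms, a combination of $\overline{\partial}^{*}f$, $f$, and $L_{k}f$. Hence $\|\overline{\partial}^{*}(f^{k})\|\lesssim\|\overline{\partial}^{*}f\|+\|f\|+\|L_{k}f\|$, and summing over $k$ and invoking the maximal estimate \eqref{EqnMaximalEst} (valid precisely because of the comparable eigenvalues hypothesis at level $q$) gives $\sum_{k}\|L_{k}f\|^{2}\lesssim\|\overline{\partial}f\|^{2}+\|\overline{\partial}^{*}f\|^{2}+\|f\|^{2}$. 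Chaining these bounds yields, for compactness, $\|f\|^{2}\lesssim\sum_{k}\|f^{k}\|^{2}\le\varepsilon\sum_{k}(\|\overline{\partial}f^{k}\|^{2}+\|\overline{\partial}^{*}f^{k}\|^{2})+C_{\varepsilon}\sum_{k}\|f^{k}\|_{-1}^{2}\lesssim\varepsilon(\|\overline{\partial}f\|^{2}+\|\overline{\partial}^{*}f\|^{2})+(\varepsilon+C_{\varepsilon})\|f\|^{2}_{-1}+(\text{small}\cdot\|f\|^{2})$; choosing the parameters appropriately and absorbing gives \eqref{comp} at level $q$. For subellipticity the same chain, with the level-$(n-1)$ estimate \eqref{subelliptic} applied to each $f^{k}$, gives $\|f\|_{\varepsilon}^{2}\lesssim\|\overline{\partial}f\|^{2}+\|\overline{\partial}^{*}f\|^{2}$; note $\varepsilon\le 1/2$ is needed so that the commutator errors in handling the $f^{k}$ in Sobolev-$\varepsilon$ norm (which are of order $2\varepsilon-1\le 0$) remain controllable, consistent with the fact that $1/2$ is the maximal order of subellipticity in any case.

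The main obstacle is bookkeeping rather than conceptual: one must ensure that all the error terms incurred — from the partition of unity, from commuting vector fields and Bergman/Neumann operators past cutoffs, and especially from estimating $\|f^{k}\|_{-1}$ or $\|f^{k}\|_{\varepsilon}$ back in terms of $\|f\|_{-1}$ or $\|f\|_{\varepsilon}$ plus lower-order slack — are genuinely of lower order and can be absorbed on the left-hand side (for subellipticity) or into the $C_{\varepsilon}\|f\|_{-1}^{2}$ term (for compactness) by a standard small-constant/large-constant argument. The cleanest route to the direction $q\Rightarrow(n-1)$, should it not already follow from the known up-percolation, is simply the footnote's observation that the comparable eigenvalues condition at level $q$ implies it at every higher level, so maximal estimates and hence all the machinery are available at level $(n-1)$ too; then percolation up from $q$ to $n-1$ is exactly the cited \cite{StraubeBook}, Proposition 4.5. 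I would present the ``down'' direction in detail as above and dispatch the ``up'' direction by citation.
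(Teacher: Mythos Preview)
Your approach is the paper's, but there is a form-degree mismatch in the core step. You set $f^{k}=f\wedge\overline{\omega_{k}}$, explicitly a $(0,q+1)$-form, and then write ``apply the level-$(n-1)$ estimate to each $f^{k}$''; unless $q=n-2$ this makes no sense. There are two repairs. The paper wedges directly with $\overline{\omega_{I}}$ for multi-indices $I$ of length $n-1-q$ with $n\notin I$, producing a $(0,n-1)$-form to which the hypothesis applies; Lemma~\ref{mainlemma} in the appendix supplies the needed control on $\|\overline{\partial}^{*}(u\wedge\overline{\omega_{I}})\|$ for such $I$ (and its proof is itself an induction on $|I|$, i.e.\ a repeated single wedge). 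Alternatively you can iterate your one-step argument, descending one degree at a time from $n-1$ to $q$, using that the comparable eigenvalues condition at level $q$ implies it at every intermediate level (so maximal estimates are available at each step). Either way, the argument as written skips this.

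A second, smaller point: your justification ``since $f$ has only tangential coefficients'' is not correct---membership in $dom(\overline{\partial}^{*})$ forces the normal component of $f$ to vanish only \emph{on} $b\Omega$, not throughout the chart. The paper handles this by splitting $f=f_{Norm}+f_{Tan}$, disposing of $f_{Norm}$ via the Sobolev-$1$ estimate $\|f_{Norm}\|_{1}\lesssim\|\overline{\partial}f\|+\|\overline{\partial}^{*}f\|$ (\cite{StraubeBook}, Lemma~2.12), and reserving the wedge construction for $f_{Tan}$, where the formula \eqref{n-1} cleanly extracts a single coefficient. In your single-wedge version the pointwise bound $\sum_{k=1}^{n-1}|f\wedge\overline{\omega_{k}}|^{2}\geq (n-1-q)|f|^{2}$ actually holds for \emph{any} $f$ when $q<n-1$ (each coefficient $f_{J}$ survives in $f^{k}$ for any $k\in\{1,\dots,n-1\}\setminus J$, and such $k$ exists), so the claimed inequality is true; only the stated reason is wrong. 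But once you wedge all the way up to degree $n-1$, the normal/tangential split becomes necessary for the coefficient-extraction formula to work cleanly.
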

The following corollary for domains with comparable eigenvalues of the 
Levi form, that is, domains which satisfy the comparable eigenvalues 
condition at level $q=1$, is immediate, but we formulate it for emphasis: 
if compactness or subelliptic estimates hold at \emph{some} form level, 
corresponding estimates hold at \emph{all} levels.

\begin{corollary}\label{CorComp}
 Let $\D$ be a smooth bounded pseudoconvex  
domain in $\C^n$, $n\geq 2$. Assume  that the eigenvalues of the Levi form 
of $b\Omega$ are comparable (equivalently, $\Omega$ admits maximal 
estimates for $(0,1)$--forms). Then

i) The $\overline{\partial}$-Neumann problem satisfies compactness estimate  
for $(0,q_{0})$-forms for some $q_{0}$, $1\leq q_{0}\leq (n-1)$, if and only if it 
satisfies compactness estimate for $(0,q)$-forms for all $q$, $1\leq q\leq (n-1)$.

ii) The $\overline{\partial}$-Neumann problem is subelliptic on $(0,q_{0})$-forms 
for some $q_{0}$, $1\leq q_{0}\leq (n-1)$, if and only if it is subelliptic on 
$(0,q)$-forms for all $q$, $1\leq q\leq (n-1)$.
\end{corollary}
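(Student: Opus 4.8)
The plan is to deduce Corollary \ref{CorComp} from Theorem \ref{ThmComp} by invoking it at every form level, together with two facts already recorded above. The first is the propagation of the hypothesis: by the footnote computation (the one showing that a comparable eigenvalues condition at level $q$ forces one at level $q+1$), the assumption that the eigenvalues of the Levi form are comparable, i.e.\ that the comparable eigenvalues condition holds at level $q=1$, implies that it holds at \emph{every} level $q$ with $1\leq q\leq n-1$. The second is percolation up the $\overline{\partial}$-complex: a compactness estimate \eqref{comp} (resp.\ a subelliptic estimate \eqref{subelliptic} of a fixed order) at some level $q_{0}$ automatically holds at every level $q$ with $q_{0}\leq q\leq n-1$ (\cite{StraubeBook}, Proposition 4.5 and the remark following its proof, \cite{McNeal05}). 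In particular, from an estimate at level $q_{0}$ one gets the corresponding estimate at level $n-1$.

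For part (i), the implication ``all $q$'' $\Rightarrow$ ``some $q_{0}$'' is trivial, so suppose the compactness estimate holds for $(0,q_{0})$-forms for some $q_{0}$. By percolation up it holds for $(0,n-1)$-forms. Now let $q$ be arbitrary with $1\leq q\leq n-1$. Since the comparable eigenvalues condition holds at level $q$, Theorem \ref{ThmComp}(i) is available there and gives that the compactness estimate for $(0,n-1)$-forms is equivalent to the one for $(0,q)$-forms; hence \eqref{comp} holds for $(0,q)$-forms. Since $q$ was arbitrary, (i) follows. Part (ii) is identical, with \eqref{subelliptic} replacing \eqref{comp} and Theorem \ref{ThmComp}(ii) replacing Theorem \ref{ThmComp}(i); the only extra bookkeeping is that if the problem is subelliptic of order $\varepsilon_{0}$ on $(0,q_{0})$-forms, we first replace $\varepsilon_{0}$ by $\min\{\varepsilon_{0},1/2\}$ so that the order falls in the range $0<\varepsilon\leq 1/2$ required by Theorem \ref{ThmComp}(ii), then percolate up to level $n-1$ and transfer back down to each level $q$.

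I do not expect a real obstacle in this corollary: all the substance lives in Theorem \ref{ThmComp} and in the maximal-estimate machinery behind it, and the corollary is simply the observation that ``level $q$ $\Leftrightarrow$ level $n-1$ for every $q$'' together with ``some $q_{0}$ $\Rightarrow$ level $n-1$'' (percolation up) chain into ``some $q_{0}$ $\Rightarrow$ all $q$''. The two points worth double-checking are precisely the ones flagged above: that the comparable eigenvalues hypothesis is genuinely in force at each intermediate level $q$ (so that Theorem \ref{ThmComp} may legitimately be applied there), which the footnote propagation guarantees, and, in part (ii), the harmless normalization of the subellipticity order to lie below $1/2$.
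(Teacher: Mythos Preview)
Your proposal is correct and matches the paper's own treatment: the paper declares the corollary ``immediate'' from Theorem~\ref{ThmComp} and gives no further argument, and what you have written is simply a careful unpacking of that word. One minor simplification: you do not actually need to propagate the comparable eigenvalues condition to every level $q$ and then invoke Theorem~\ref{ThmComp} separately at each such $q$; it suffices to apply Theorem~\ref{ThmComp} once at level $q=1$ (the level given by the hypothesis) to pass from level $n-1$ down to level $1$, and then percolate up from level $1$ to every $q$---but your route via the footnote propagation is equally valid.
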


We remark that the $\overline{\partial}$-Neumann problem is always 
subelliptic (and hence also compact) on $(0,n)$-forms.

\smallskip

The rest of the paper is organized as follows. In section 2 we prove 
Theorem \ref{ThmNonCompact}. Theorem \ref{ThmHankel} and 
Corollary \ref{Cor1} are shown in section 3. Section 4 contains the 
proof of Theorem \ref{ThmComp}. In the appendix, section 5, 
we compute $\overline{\partial}^{*}(f\wedge\overline{\omega_{k}})$ 
for $f \in dom(\overline{\partial}^{*})$.

\section{Proof of Theorem \ref{ThmNonCompact}}

\begin{proof}[Proof of Theorem \ref{ThmNonCompact}:]
The proof combines ideas from \cite{FuStraube98} and 
\cite{CuckovicSahutoglu09, ClosCelikSahutoglu}. (In turn, these ideas 
can be traced back at least to \cite{Catlin81, DP81}.) In particular, we 
follow the geometric setup in the proof of the implication (1) 
$\Rightarrow$ (2) in Theorem 1.1 in \cite{FuStraube98}. If $b\Omega$ 
contains a complex variety $V$ of dimension $q$ as in Theorem \ref{ThmNonCompact}, 
its convex hull $W$ is an affine variety in $b\Omega$ (\cite{CuckovicSahutoglu09}, 
Lemma 2, see also \cite{FuStraube98}, section 2) of dimension at least $q$. 
Because $\phi$ is not holomorphic on $V$, it is not holomorphic on $W$.
Consequently, there is a $q$-dimensional affine variety in $W\subset
b\Omega$  on which $\phi$ is not holomorphic. 
After a suitable affine change of 
coordinates, we may assume that $(2\mathbb{D})^{q}\times \{0\}
= \{(z',0)\in\mathbb{C}^{n}; z'\in(2\mathbb{D})^{q}\} \subset b\Omega$, 
where $\mathbb{D}$ is the unit disc in $\mathbb{C}$ and 
$z' = (z_{1}, \ldots, z_{q})$, and that $\partial \phi/\partial \zb_1(z)\neq 0$ 
when $|z_1|< 1$. Let $z''=(z_{q+1},\ldots, z_n)$. We set
$\D_1:=\{z''\in \C^{n-q}:(0,z'')\in \D\}$, and 
$\Omega_{2}:= \{z''\in \mathbb{C}^{n-q}: 2z''\in \Omega_{1}\}$. 
Convexity of $\Omega$ implies that 
$\mathbb{D}^{q} \times \Omega_{2} \subseteq \Omega$ (\cite{FuStraube98}, 
page 636): every point in this set is the midpoint of a line segment joining 
a point in $\mathbb{D}^{q} \times \{0\}$ to a point in $\{0\} \times \Omega_{1}$.

The crucial analytic fact from \cite{FuStraube98} is the following. There exists 
a bounded sequence $\{F_{j}\}_{j=1}^{\infty} \subset A^{2}(\Omega)$ such that 
the sequence $\{f_{j}\}_{j=1}^{\infty}$ of restrictions to $\Omega_{2}$, given 
by $f_{j}(z''):= F_{j}(0, z'')$, belongs to $A^{2}(\Omega_{2})$, but does not 
admit a convergent subsequence.
\footnote{The crux of the matter is that the restriction operator from 
	$A^{2}(\Omega_{1})$ to $A^{2}(\Omega_{2})$ is not compact; the proof 
	involves estimates on the Bergman kernel of $\Omega_{1}$. 
	The Ohsawa-Takegoshi extension theorem then allows to pass from a 
	sequence on $\Omega_{1}$ with the required property to a suitable 
	sequence on $\Omega$.}

For the rest of the argument, we follow 
\cite{CuckovicSahutoglu09,ClosCelikSahutoglu}, with appropriate 
modifications. Choose a radially symmetric non-negative function 
$\chi\in C^{\infty}_0(\mathbb{D})$  such that $\chi(\xi)=1$ for $|\xi|\leq 1/2$ 
and $\chi(\xi)=0$ for $|\xi|\geq 3/4$. We denote  
$\int_{\{|\xi|\leq 3/4\}}\chi(\xi)dV(\xi)=c_{\chi}>0$. Because 
 $\partial\phi/\partial\overline{z_{1}} \neq 0$ when $|z_{1}|<1$, we can 
 define $\gamma\in C(\overline{\Omega})$ via  the formula
\begin{equation}\label{gamma}
\gamma(z',z'')\frac{\partial \phi(z',z'')}{\partial \zb_1}= \chi(z_1)\cdots \chi(z_q)\; .
\end{equation}
Note that for $z''$ fixed, $\gamma(\cdot,z'')$ is compactly supported 
in $\mathbb{D}^{q}$, uniformly in $z''$. We will eventually have to 
approximate $\gamma$ by a smooth function, so let 
$\gamma_{1}\in C^{\infty}(\overline{\Omega})$ such that for 
$z''\in \Omega_{2}$, $\gamma_{1}(\cdot,z'')$ is compactly supported 
in $\mathbb{D}^{q}$. Let $F\in A^2(\D)$  and  
$\alpha=Fd\zb_2\wedge \cdots\wedge d\zb_q\in A^2_{(0,q-1)}(\D)$\footnote{When $q=1$, this definition is to be interpreted as $\alpha = F$.}. 
Denote by $\langle\;, \; \rangle$ the standard 
pointwise inner product on forms in $\C^q$.
Then, for $z''\in \Omega_{2}$, the mean value property for holomorphic 
functions gives
\begin{multline}\label{keycalc}
(c_{\chi})^qF(0,z'')=\int_{\mathbb{D}^q} \chi(z_1)\cdots \chi(z_q)F(z',z'')dV(z')\\
=\int_{\mathbb{D}^q} \gamma(z',z'')\frac{\partial \phi(z',z'')}{\partial \zb_1}F(z',z'')dV(z')
= \int_{\mathbb{D}^q} \langle \dbar \phi\wedge \alpha, \overline{\gamma}d\zb_1\wedge\cdots\wedge d\zb_q\rangle dV(z')\\
= \int_{\mathbb{D}^q} \langle \dbar \phi\wedge \alpha, \overline{\gamma_{1}}d\zb_1\wedge\cdots\wedge d\zb_q\rangle dV(z') + \int_{\mathbb{D}^q} \langle \dbar \phi\wedge \alpha, (\overline{\gamma}-\overline{\gamma_{1}})d\zb_1\wedge\cdots\wedge d\zb_q\rangle dV(z') \;.
\end{multline}
Because $F$ is holomorphic, $\overline{\partial}\alpha = 0$ implies
\begin{equation}\label{key}
\overline{\partial}\phi\wedge\alpha = \overline{\partial}(\phi\alpha) = \overline{\partial}(\phi\alpha - P_{q-1}(\phi\alpha)) =  \overline{\partial}H^{q-1}_{\phi}\alpha \; .
\end{equation}
Denote by $\overline{\partial}_{z'}^{*}$ the formal adjoint of the $\overline{\partial}$-operator in the $z'$ variables. Inserting \eqref{key} into the first term in the third line of \eqref{keycalc} shows that
\begin{multline}\label{keycalc2}
\int_{\mathbb{D}^q} \langle \dbar \phi\wedge \alpha, \overline{\gamma_{1}}d\zb_1\wedge\cdots\wedge d\zb_q\rangle dV(z') = 
\int_{\mathbb{D}^q} \langle \dbar H^{q-1}_{\phi}\alpha,  \overline{\gamma_{1}}d\zb_1\wedge\cdots\wedge d\zb_q\rangle dV(z') \\
= \int_{\mathbb{D}^q} \langle  H^{q-1}_{\phi}\alpha\;, \,\overline{\partial}_{z'}^{*}  
(\overline{\gamma_{1}}d\zb_1\wedge\cdots\wedge d\zb_q)\rangle dV(z')\;. 
\end{multline}
We have used here that terms in $\overline{\partial}H_{\phi}^{q-1}\alpha$ and $H_{\phi}^{q-1}\alpha$ that contain differentials $d\overline{z_{s}}$ with $s\geq (q+1)$ drop out upon taking inner products with $\overline{\gamma_{1}}d\overline{z_{1}}\wedge\cdots\wedge d\overline{z_{q}}$ or $\overline{\partial}_{z'}^{*}(\overline{\gamma_{1}}d\overline{z_{1}}\wedge\cdots\wedge d\overline{z_{q}})$, respectively, and that for $z''\in \Omega_{2}$ fixed, $\gamma$ is compactly supported in $\mathbb{D}^{q}$.

Now let $\{F_{j}\}_{j=1}^{\infty} \subset A^{2}(\Omega)$ be a bounded sequence whose sequence of  restrictions $\{f_{j}\}_{j=1}^{\infty} \subset A^{2}(\Omega_{2})$ does not admit a convergent subsequence, and set $\alpha_{j}=F_{j}d\zb_2\wedge \cdots\wedge d\zb_q\in A^2_{(0,q-1)}(\D)$ (with the convention from above when $q=1$). Then we have from \eqref{keycalc} and \eqref{keycalc2}
\begin{multline}
(c_{\chi})^q(f_j(z'')-f_k(z''))
= \int_{\mathbb{D}^q} \langle  H^{q-1}_{\phi}(\alpha_j-\alpha_k),
\overline{\partial}_{z'}^{*}(\overline{\gamma_{1}}d\zb_1\wedge\cdots\wedge d\zb_q)\rangle dV(z') \\
+ \int_{\mathbb{D}^q} \langle \dbar \phi\wedge (\alpha_{j}-\alpha_{k}), (\overline{\gamma}-\overline{\gamma_{1}})d\zb_1\wedge\cdots\wedge d\zb_q\rangle dV(z') 
\end{multline}
for any $j,k$, and thus
\begin{multline} \label{Eqn4}
|f_j(z'')-f_k(z'')|^2 
\lesssim C_{\gamma_{1}}\int_{\mathbb{D}^q} |H^{q-1}_{\phi}(\alpha_j-\alpha_k)(z',z'')|^2dV(z')\\
+\; \int_{\mathbb{D}^{q}}\left|(\alpha_{j}-\alpha_{k})(z',z'')\right|\,\left|(\overline{\gamma}-\overline{\gamma_{1}})(z',z'')\right|dV(z') \;,
\end{multline}
where $C_{\gamma_{1}}$ is a constant that depends on $\gamma_{1}$.
Integrating both sides of \eqref{Eqn4} with respect to $z''\in \D_2$ gives 
\begin{equation}\label{Eqn3}
\|f_j-f_k\|^2_{A^{2}(\D_2)}\lesssim 
C_{\gamma_{1}}\|H^{q-1}_{\phi}(\alpha_j-\alpha_k)\|^2_{L^2_{(0,q-1)}(\D)} + (\sup_{\overline{\mathbb{D}^{q}}\times\overline{\Omega_{2}}}|\gamma-\gamma_{1}|) \|\alpha_{j}-\alpha_{k}\|_{L^{2}_{(0,q-1)}(\Omega)} \;.
\end{equation} 
Assume now that $H^{q-1}_{\phi}$ is compact on $A^{2}_{(0,q-1)}(\Omega)$. Then the sequence $\{H^{q-1}_{\phi}\alpha_{j}\}_{j=1}^{\infty} \subset L^{2}_{(0,q-1)}(\Omega)$ has a subsequence, say $\{H^{q-1}_{\phi}\alpha_{j_{s}}\}_{s=1}^{\infty}$, that is convergent, and so is a Cauchy sequence. Because $\{\alpha_{j}\}_{j=1}^{\infty}$ is bounded in $L^{2}_{(0,q-1)}(\Omega)$, and we can make $(\sup_{\overline{\mathbb{D}^{q}}\times\overline{\Omega_{2}}}|\gamma-\gamma_{1}|)$ as small as we wish, \eqref{Eqn3} implies that the sequence $\{f_{j_{s}}\}_{s=1}^{\infty} \subset A^{2}(\Omega_{2})$ is  Cauchy as well, and therefore is convergent. This is a contradiction. Therefore, $H^{q-1}_{\phi}$ cannot be compact on $A^{2}_{(0,q-1)}(\Omega)$. 
\end{proof}
It is worth noting that in the last part of this proof (from \eqref{keycalc} on), the two key steps are the exploitation of nonanalyticity of $\phi$ via the introduction of $\overline{\partial}\phi$ into the mean value equation \eqref{keycalc}, and the observation \eqref{key}. The extra complications in the formulas arise from approximating $\gamma$ by a smooth function. This step is needed because 
we can only assert that $\gamma\in C(\overline{\Omega})$ from $\phi\in C^{1}(\overline{\Omega})$, yet in \eqref{keycalc2}, $\gamma$ (resp. $\gamma_{1}$) is differentiated (via $\overline{\partial}_{z'}^{*}$). These complications could be avoided by assuming $\phi\in C^{2}(\overline{\Omega})$.

\section{Proofs of Theorem \ref{ThmHankel} and Corollary \ref{Cor1}}

We start with the proof of Theorem \ref{ThmHankel}.
\begin{proof}[Proof of Theorem \ref{ThmHankel}:] 
It suffices to show that for every $\varepsilon >0$, there exists $C_{\varepsilon}$ so that we have the family of estimates
\begin{equation}\label{CompHan}
 \|H_{\phi}^{q-1}f\|^{2} \leq \varepsilon\|f\|^{2} + C_{\varepsilon}\|f\|_{-1}^{2}\;,\; f\in K_{(0,q-1)}^{2}(\Omega)\; .
 \end{equation}
Because $K^{2}_{(0,q-1)}(\Omega)$ embeds compactly into $W^{-1}_{(0,q-1)}(\Omega)$, this family of estimates will imply that $H_{\phi}^{q-1}: K^{2}_{(0,q-1)}(\Omega) \rightarrow L^{2}_{(0,q-1)}(\Omega)$ is compact (\cite{StraubeBook}, Lemma 4.3; in fact, compactness of $H_{\phi}^{q-1}$ is equivalent to this family of estimates). Note that the left hand side of \eqref{CompHan} equals
\begin{equation}\label{simpleHan}
\langle H^{q-1}_{\phi}f,H^{q-1}_{\phi}f\rangle = 
\langle \dbar^*N_q(\dbar\phi\wedge f),\dbar^*N_q(\dbar\phi\wedge f)\rangle
= \langle N_q(\dbar\phi\wedge f),\dbar\phi\wedge f\rangle \; .
\end{equation}
We will estimate the right hand side of \eqref{simpleHan}.

Denote by $A$ the union of all the $(n-1)$-dimensional analytic (then actually affine, by convexity, \cite{FuStraube98}, \cite{CuckovicSahutoglu09}, Lemma 2) varieties in the boundary. Near the boundary, the split of forms into their normal and tangential components is well defined. A detailed discussion may be found in \cite{StraubeBook}, section 2.9. The tangential component $(\overline{\partial}\phi)_{Tan}$ of $\overline{\partial}\phi$ vanishes at points of $\overline{A}$. For $\varepsilon >0$, denote by $U_{\varepsilon}$ a neighborhood of $\overline{A}$ in $\mathbb{C}^{n}$ such that $|(\overline{\partial}\phi)_{Tan}| < \varepsilon$ on $U_{\varepsilon}\cap\overline{\Omega}$, and choose a cutoff function $\chi_{1}\in C^{\infty}_{0}(U_{\varepsilon})$ with $\chi_{1} \equiv 1$ near $\overline{A}$. Then
\begin{equation}\label{easy}
 \left|\langle\chi_{1} N_q(\dbar\phi\wedge f),(\dbar\phi)_{Tan}\wedge f\rangle\right| \leq \|N_q(\dbar\phi\wedge f)\|\|\chi_{1}((\dbar\phi)_{Tan}\wedge f)\| \lesssim \varepsilon\|f\|^{2}\;.
\end{equation}
To estimate the contribution from the normal component $(\overline{\partial}\phi)_{Norm}$ of $\overline{\partial}\phi$, notice that only the normal component $\big(\chi_{1}N_{q}(\overline{\partial}\phi\wedge f)\big)_{Norm}$ will be involved (as $\big((\overline{\partial}\phi)_{Norm}\wedge f\big)$ has vanishing tangential component).
Estimate 2.91 in \cite{StraubeBook} provides the estimate
\begin{multline}\label{normest}
 \;\;\;\;\|(\chi_{1}N_{q}(\overline{\partial}\phi\wedge f))_{Norm}\|_{1} 
 \lesssim 
 \|\overline{\partial}\big(\chi_{1}N_{q}(\overline{\partial}\phi\wedge f)\big)\| 
 + \|\overline{\partial}^{*}\big(\chi_{1}N_{q}(\overline{\partial}\phi\wedge f)\big)\| \\
 \lesssim \|N_{q}(\overline{\partial}\phi\wedge f)\| + \|\overline{\partial}\phi\wedge f\|
  \lesssim \|f\| \;.\;\;\;\;
\end{multline}
Because $W^{1}(\Omega)$ imbeds compactly into $L^{2}(\Omega)$, the map $f \rightarrow \chi_{1}N_{q}(\overline{\partial}\phi\wedge f)_{Norm}$ is compact from $K^{2}_{(0,q-1)}(\Omega)$ to $L^{2}_{(0,q)}(\Omega)$. Therefore (\cite{StraubeBook}, Lemma 4.3)
\begin{equation}\label{Norm}
 \|\chi_{1}N_{q}(\overline{\partial}\phi\wedge f)_{Norm}\| \leq \varepsilon\|f\| + C_{\varepsilon}\|f\|_{-1} 
\end{equation}
(i.e. for all $\varepsilon > 0$, there exists $C_{\varepsilon}$ such that \eqref{Norm} holds). This
gives
\begin{equation}\label{easy*}
 \left|\langle\chi_{1} N_q(\dbar\phi\wedge f),(\dbar\phi)_{Norm}\wedge f\rangle\right| \lesssim
 \big(\varepsilon\|f\| + C_{\varepsilon}\|f\|_{-1}\big)\|f\| \leq 2\varepsilon\|f\|^{2} + C_{\varepsilon}\|f\|_{-1}^{2}\;.
 \end{equation}
Here, we have used the usual small constant--large constant estimate on $\|f\|_{-1}\|f\|$, and we have allowed $C_{\varepsilon}$ to change its value. It remains to estimate $\langle(1-\chi_{1}) N_q(\dbar\phi\wedge f),\dbar\phi\wedge f\rangle$.

\smallskip

In estimating this latter contribution, we use two observations. The first is that functions in $C(\overline{\Omega})$ that vanish on $\overline{A}$ are compactness multipliers for $(0,n-1)$-forms (\cite{CelikStraube09}, Proposition 1 and Theorem 3). The second observation is that, more or less, norms of $(0,q)$-forms can be estimated by norms of certain associated $(0,n-1)$-forms (for which we can then apply the compactness estimate).

We elaborate on the second observation. Suppose $u\in L^{2}_{(0,q)}(\Omega)$ is supported in a special boundary chart, with vanishing normal component, say $u=\sideset{}{'}\sum_{|J|=q, n\notin J}u_{J}\overline{\omega_{J}}$. Fix a multi-index $I$ of length $(n-1-q)$, with $n\notin I$. Then
\begin{equation}\label{n-1}
 u\wedge\overline{\omega_{I}} = \sideset{}{'}\sum_{|J|=q,n\notin J}u_{J}(\overline{\omega_{J}}\wedge\overline{\omega_{I}}) = \epsilon^{I^{c},I}_{(1,\ldots,n-1)}u_{I^{c}}(\overline{\omega_{1}}\wedge\cdots\wedge\overline{\omega_{n-1}})\;,
\end{equation}
where $I^{c}$ is the increasingly ordered multi-index of length $q$ which as a set is the complement of $I$ in $\{1,\ldots,n-1\}$, and $\epsilon^{I^{c},I}_{(1,\ldots,n-1)}$ denotes the usual Kronecker symbol. \eqref{n-1} shows that taking the wedge product with $\overline{\omega_{I}}$ singles out precisely one coefficient of $u$ (namely $u_{I^{c}}$). If we now let $I$ vary over all multi-indices of length $(n-1-q)$ (and not containing $n$), $I^{c}$ will vary over all indices of length $q$. Therefore, to estimate $\|u\|$, it suffices to estimate $\|u\wedge\overline{\omega_{I}}\|$, for all such $I$. The point of not having $n$ in $I$ is that we will want to use compactness estimate on $u\wedge\overline{\omega_{I}}$, which requires the form to be in the domain of $\overline{\partial}^{*}$. In order to apply this scheme to the form $N_{q}(\overline{\partial}\phi\wedge f)$, which appears on the right hand side of \eqref{simpleHan}, we need to localize and also take care of normal components. To this end, choose cutoff functions $\chi_{2},\ldots,\chi_{m}$ so that together with $\chi_{1}$ from above, they form a partition of unity near $b\Omega$, and so that for $2\leq s\leq m$, $\chi_{s}$ is supported in a special boundary chart. Moreover, $\chi_{2},\cdots,\chi_{m}$ can be chosen so that the supports stay close enough to $b\Omega$ that splitting forms into their tangential and normal components is well defined.
Also set $\chi_{0}:= 1-(\chi_{1}+\cdots+\chi_{m})$ on $\Omega$; then $\chi_{0}\in C^{\infty}_{0}(\Omega)$.

Fix an $s$ with $2\leq s\leq m$, and consider $\chi_{s}N_{q}(\overline{\partial}\phi\wedge f)$. Note that multiplication by $\chi_{s}$ preserves the domain of $\overline{\partial}^{*}$. The normal component of a form in $dom(\overline{\partial})\cap dom(\overline{\partial}^{*})$ is in $W^{1}_{0}(\Omega)$, and so is also in $dom(\overline{\partial})\cap dom(\overline{\partial}^{*})$ (\cite{StraubeBook}, section 2.9). Therefore, so is the tangential component. Moreover, we have as in \eqref{normest} and \eqref{Norm}
 \begin{equation}\label{normest2}
 \|(\chi_{s}N_{q}(\overline{\partial}\phi\wedge f))_{Norm}\|_{1} \lesssim \|\overline{\partial}\phi\wedge f\| \lesssim \|f\| \;,
\end{equation}
and
\begin{equation}\label{Norm2}
 \|\chi_{s}N_{q}(\overline{\partial}\phi\wedge f)_{Norm}\| \leq \varepsilon\|f\| + C_{\varepsilon}\|f\|_{-1} \;.
\end{equation}
 For economy of notation, let us denote the tangential component of $\chi_{s}N_{q}(\overline{\partial}\phi\wedge f)$ by $u_{s}$. Then $u_{s}\in dom(\overline{\partial})\cap dom(\overline{\partial}^{*})$, and the discussion above applies: we only have to estimate the forms $u_{s}\wedge\overline{\omega_{I}}$, where $I$ varies over all multi-indices of length $(n-1-q)$ that do not contain $n$. Note that for such $I$, $u_{s}\wedge\overline{\omega_{I}}\in dom(\overline{\partial})\cap dom(\overline{\partial}^{*})$ (see Lemma \ref{mainlemma} in the appendix). If $\tilde{\chi_{s}}$ is a cutoff function supported in $\overline{\Omega}\setminus \overline{A}$, it is a compactness multiplier for $(0,n-1)$-forms (\cite{CelikStraube09}, Proposition 1 and Theorem 3). If in addition $\tilde{\chi}\equiv 1$ on the support of $\chi_{s}$, then $u_{s}\wedge\overline{\omega_{I}} = \tilde{\chi}(u_{s}\wedge\overline{\omega_{I}})$. Therefore, for all $\varepsilon >0$, there exists $C_{\varepsilon}$ such that
\begin{multline}\label{comp2}
 \;\;\;\;\;\;\;\|u_{s}\wedge\overline{\omega_{I}}\| = \|\tilde{\chi}(u_{s}\wedge\overline{\omega_{I}})\| \\
 \leq \varepsilon \left(\|\overline{\partial}(u_{s}\wedge\overline{\omega_{I}})\| + \|\overline{\partial}^{*}(u_{s}\wedge\overline{\omega_{I}})\|\right) +
 C_{\varepsilon} \|u_{s}\wedge\overline{\omega_{I}}\|_{-1} \;.\;\;\;\;\;\;\;\;\;\;
\end{multline}
Estimate \eqref{starest} in Lemma \ref{mainlemma} gives
\begin {multline}\label{easy2}
 \|\overline{\partial}(u_{s}\wedge\overline{\omega_{I}})\| + \|\overline{\partial}^{*}(u_{s}\wedge\overline{\omega_{I}})\| \lesssim \|\overline{\partial}u_{s}\| + \|\overline{\partial}^{*}u_{s}\| \\
 \lesssim \|\overline{\partial}(\chi_{s}N_{q}(\overline{\partial}\phi\wedge f))\| + \|\overline{\partial}^{*}(\chi_{s}N_{q}(\overline{\partial}\phi\wedge f))\| + \|f\| \\
 \lesssim \|N_{q}(\overline{\partial}\phi\wedge f)\| + \|\overline{\partial}^{*}N_{q}(\overline{\partial}\phi\wedge f)\| + \|f\| \lesssim \|f\|\;.
\end {multline}
In the second estimate, we have used \eqref{normest2}, which implies that $\|\overline{\partial}u_{s}\|\leq \|\overline{\partial}\big(\chi_{s}N_{q}(\overline{\partial}\phi\wedge f)\big)\|+\|f\|$, as well as the analogous estimate for $\|\overline{\partial}^{*}u_{s}\|$ (since $u_{s}=\chi_{s}N_{q}(\overline{\partial}\phi\wedge f)-\big(\chi_{s}N_{q}(\overline{\partial}\phi\wedge f)\big)_{Norm})$.
We point out that estimating the term $\|\overline{\partial}(u_{s}\wedge\overline{\omega_{I}})\|$ is straightforward; it is only in estimating $\|\overline{\partial}^{*}(u_{s}\wedge\overline{\omega_{I}})\|$ that the assumption on maximal estimates is needed. For the last term in \eqref{comp2}, we observe that because the forms $\omega_{I}$ are smooth up to the boundary
\begin {equation}\label{easy3}
 \|u_{s}\wedge\overline{\omega_{I}}\|_{-1} \lesssim  \|u_{s}\|_{-1} \leq \varepsilon\|f\| + C_{\varepsilon}\|f\|_{-1} \;,
\end {equation}
for a suitable $C_{\varepsilon}$.
The second inequality follows again with \cite{StraubeBook}, Lemma 4.3, because the map $f\rightarrow u_{s}$ is continuous into $L^{2}_{(0,q)}(\Omega)$, hence compact into $W^{-1}_{(0,q)}(\Omega)$.

Combining \eqref{Norm2} through \eqref{easy3}, we find
\begin{equation}\label{combined}
 \|\chi_{s}N_{q}(\overline{\partial}\phi\wedge f)\| \leq \varepsilon \|f\| + C_{\varepsilon}\|f\|_{-1}\;,
\end{equation}
again for $C_{\varepsilon}$ suitably big. Therefore (as in \eqref{Norm},\eqref{easy*}), 
\begin{multline}\label{chi-s}
 \;\;\;\left|\langle \chi_{s}N_q(\dbar\phi\wedge f),\dbar\phi\wedge f\rangle\right| \leq 
 \|\chi_{s}N_q(\dbar\phi\wedge f)\|\|\overline{\partial}\phi\wedge f\| \\
 \lesssim \;\left(\varepsilon\|f\| + C_{\varepsilon}\|f\|_{-1}\right)\|f\| \;\;\lesssim \;\;2\varepsilon\|f\|^{2} + C_{\varepsilon}\|f\|_{-1}^{2}\;.\;\;\;\;\;\;
\end{multline}

It remains to estimate the contribution from the factor $\chi_{0}$ to the right hand side of \eqref{simpleHan}. This is a consequence of interior elliptic regularity. A short argument is as follows. Because $\chi_{0}$ vanishes on the boundary, it is a compactness multiplier, so that \cite{CelikStraube09}, Proposition 1 and Remark 2 give the same estimate as \eqref{combined}, but with $\chi_{0}$ in place of $\chi_{s}$. In turn, we obtain, as in \eqref{chi-s},
\begin{equation}\label{interior}
\left|\langle \chi_{0}N_q(\dbar\phi\wedge f),\dbar\phi\wedge f\rangle\right| 
 \lesssim \;2\varepsilon\|f\|^{2} + C_{\varepsilon}\|f\|_{-1}^{2}\;.
\end{equation}
We have used that in (32) in \cite{CelikStraube09}, it is immaterial whether the estimate is stated with $\|\cdot\|$ or with $\|\cdot\|^{2}$.

\medskip

\eqref{simpleHan} together with \eqref{easy}, \eqref{easy*}, \eqref{chi-s}, 
and \eqref{interior} establish the family of estimates in \eqref{CompHan}. 
This completes the proof of Theorem \ref{ThmHankel}.
\end{proof}

We complete this section by proving Corollary \ref{Cor1}.
\begin{proof}[Proof of Corollary \ref{Cor1}:]
We argue indirectly. Let $V$ a $q$-dimensional analytic variety in $b\Omega\setminus\overline{A}$. We may assume that $\overline{V}\cap\overline{A} = \emptyset$, and furthermore, that $V$ is smooth (otherwise, choose a small enough subset of $V$ near a regular point of $V$ in $b\Omega\setminus \overline{A}$). Choose a symbol $\phi\in C^{\infty}(\overline{\Omega})$ that vanishes identically on $\overline{A}$ and is \emph{not} holomorphic on $V$. 
Because we have maximal estimates for $(0,q)$-forms (i.e. the comparable eigenvalues condition at level $q$), and $\phi$ is (trivially) holomorphic on every $(n-1)$-dimensional variety in the boundary, Theorem \ref{ThmHankel} implies that $H^{q-1}_{\phi}: K^{2}_{(0,q-1)}(\Omega) \rightarrow L^{2}_{(0,q-1)}(\Omega)$ is compact. This contradicts Theorem \ref{ThmNonCompact}.
 \end{proof}

\section{Proof of Theorem \ref{ThmComp}}

\begin{proof}[Proof of Theorem \ref{ThmComp}:]
First note that both compactness and subellipticity of the $\overline{\partial}$-Neumann problem are known to percolate up (see for example \cite{StraubeBook}, Proposition 4.4 and the remark following its proof; the proof shows that the subelliptic gain does not decrease). Therefore, we only have to show the downward percolation in both (i) and (ii) under the assumptions in Theorem \ref{ThmComp}. We do this for (ii) first.

To prove the downward percolation in (ii), let $1\leq q\leq (n-1)$. We need to prove the estimate
\begin{equation}\label{subelliptic1}
 \|f\|_{\varepsilon}^{2} \lesssim \|\overline{\partial}f\|^{2} + \|\overline{\partial}^{*}f\|^{2}\;,\; f\in dom(\overline{\partial})\cap dom(\overline{\partial}^{*})\subset L^{2}_{(0,q)}(\Omega)\;,
\end{equation}
provided such an estimate (with the same $\varepsilon$) holds for $(0,n-1)$-forms. The argument follows section 3 closely.

Via a partition of unity, it again suffices to check this estimate for forms supported in a special boundary chart. Thus, 
\begin{equation}\label{split}
\|f\|_{\ep}\lesssim  
\|f_{Norm}\|_{\ep}+\|f_{Tan}\|_{\ep}\;,
\end{equation}
where $f_{Norm}$ and $f_{Tan}$ denote the normal and tangential components of $f$, respectively (see again \cite{StraubeBook}, section 2.9). The Sobolev-$1$ estimate for $f_{Norm}$ (\cite{StraubeBook}, Lemma 2.12) says that 
\begin{equation}\label{normalsubel}
 \|f_{Norm}\|_{\varepsilon} \lesssim \|f_{Norm}\|_{1} \lesssim \|\dbar f\|+\|\dbar^*f\| \;.
\end{equation}
As in section 3, equation \eqref{n-1}, one can see that in order to estimate the second term on the right hand side of \eqref{split}, it suffices to estimate $\|f_{Tan}\wedge\overline{\omega_{I}}\|_{\varepsilon}$ for all (increasing) multi-indices $I$ of length $(n-1-q)$ with $n\not\in I$. The form $(f_{Tan}\wedge\overline{\omega_{I}})$ is a $(0,n-1)$-form, and we can use the subelliptic estimate that is assumed at this form level. The result is
\begin{equation}\label{tansubel}
 \|f_{Tan}\wedge\overline{\omega_{I}}\|_{\ep} 
 \lesssim
  \|\dbar(f_{Tan}\wedge\overline{\omega_{I}})\|
+\|\dbar^*(f_{Tan}\wedge\overline{\omega_{I}})\| \;.
\end{equation}
Lemma \ref{mainlemma} in section \ref{SectionAppendix} says that the right 
hand side in \eqref{tansubel} is dominated by 
$\|\overline{\partial}f_{Tan}\| + \|\overline{\partial}^{*}f_{Tan}\|$. 
In turn, this sum is dominated by $\|\overline{\partial}f\| + 
\|\overline{\partial}^{*}f\|$ (in view of the second inequality in 
\eqref{normalsubel} and since $f_{Tan} = f - f_{Norm}$), as in \eqref{easy2}. 
With this, and \eqref{normalsubel} and \eqref{tansubel}, 
the estimate \eqref{subelliptic1} is established.

\smallskip

The proof for the downward percolation in (i) is analogous.
\end{proof}

\emph{Remark:}
Part (ii) of Theorem \ref{ThmComp} may be known, at least 
in principle. Namely, if the $\overline{\partial}$--Neumann problem is subelliptic 
on $(0,n-1)$--forms, then the maximum order of contact of $(n-1)$--dimensional 
complex manifolds with the boundary is finite and equals the reciprocal of the 
subelliptic gain (\cite{Catlin83}, Theorem 1 and (1.3)). By \cite{BloomGraham77}, 
the commutator type $m$ of the boundary also equals this quantity. Finite 
commutator type $m$ and a comparable eigenvalues condition at level $q$ 
then imply $1/m$ -- subellipticity for $(0,q)$--forms as well (\cite{Koenig15}, 
Theorem 1.2). Nevertheless, our proof, which is quite simple and direct, 
is still of interest. Of course, it is also needed for part (i) of 
Theorem \ref{ThmComp}.

\section{Appendix} \label{SectionAppendix}
The comparable eigenvalues conditions in Theorems \ref{ThmHankel} and \ref{ThmComp} are used only to 
see that $(u\wedge\overline{\omega}_{I}) \in dom(\overline{\partial}^{*})$ if $u\in dom(\overline{\partial})\cap dom(\overline{\partial}^{*})$, and to control $\|\overline{\partial}^{*}(u\wedge\overline{\omega}_{I})\|$, where $u$ is supported in an appropriate boundary chart (and $n\not\in I$). This requires a computation which we present in this appendix; no originality is claimed. Throughout this section $\Omega$ denotes a smooth bounded pseudoconvex domain.
\begin{lemma}\label{Lderivative}
Assume that the Levi form of $b\Omega$ satisfies a comparable eigenvalues condition at level $q$ for some $q$ with $1\leq q\leq (n-1)$. Let $u \in dom(\overline{\partial})\cap dom (\overline{\partial}^{*})\subset L^{2}_{(0,q)}(\Omega)$ be supported in a special boundary chart. Then $L_{k}u$ (computed in the sense of distributions) is actually in $L^{2}_{(0,q)}(\Omega)$, and
\begin{equation}\label{Lest}
\|L_{k}u\| \lesssim \|\overline{\partial}u\| + \|\overline{\partial}^{*}u\| \;,\; 1\leq k\leq (n-1)\;. 
\end{equation}
\end{lemma}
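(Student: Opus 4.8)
The plan is to deduce \eqref{Lest} from the maximal estimate \eqref{EqnMaximalEst}, which holds precisely because the Levi form satisfies a comparable eigenvalues condition at level $q$. The left hand side of \eqref{EqnMaximalEst} already contains $\sum_{j=1}^{n-1}\|L_{j}f\|^{2}$, so for every $f\in C^{\infty}_{(0,q)}(\overline{\Omega})\cap dom(\dbar^{*})$ supported in the chart and every $k$ with $1\leq k\leq n-1$ one gets $\|L_{k}f\|\lesssim\|\dbar f\|+\|\dbar^{*}f\|+\|f\|$; the term $\|f\|$ may be dropped on a pseudoconvex domain, as noted right after \eqref{EqnMaximalEst} (or absorbed via the basic estimate $\|f\|^{2}\lesssim\|\dbar f\|^{2}+\|\dbar^{*}f\|^{2}$ valid for $f\in dom(\dbar)\cap dom(\dbar^{*})\subset L^{2}_{(0,q)}(\Omega)$). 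Thus the only real work is to remove the hypothesis that $u$ be smooth up to $\overline{\Omega}$ and to make sense of $L_{k}u$ as an $L^{2}$ form.

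First I would reduce to smooth forms. By the Friedrichs/H\"{o}rmander density lemma (see \cite{StraubeBook}) there is a sequence $u_{\nu}\in C^{\infty}_{(0,q)}(\overline{\Omega})\cap dom(\dbar^{*})$ with $u_{\nu}\to u$ in the graph norm $\|\cdot\|+\|\dbar\,\cdot\|+\|\dbar^{*}\cdot\|$. Let $U$ be the special boundary chart containing $\mathrm{supp}\,u$, chosen small enough that \eqref{EqnMaximalEst} is valid in $U$, and pick $\chi\in C^{\infty}_{0}(U)$ with $\chi\equiv 1$ near $\mathrm{supp}\,u$. Replacing $u_{\nu}$ by $\chi u_{\nu}$, I may assume in addition that each $u_{\nu}$ is supported in $U$: multiplication by the smooth function $\chi$ preserves $dom(\dbar^{*})$ and alters $\dbar$ and $\dbar^{*}$ only by zeroth order terms, so $\chi u_{\nu}\to\chi u=u$ still in the graph norm.

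Next I would pass to the limit. Applying the smooth-form estimate $\|L_{k}f\|\lesssim\|\dbar f\|+\|\dbar^{*}f\|+\|f\|$ of the first paragraph to the differences $u_{\nu}-u_{\mu}$ shows that $\{L_{k}u_{\nu}\}$ is Cauchy in $L^{2}_{(0,q)}(\Omega)$, with limit $g_{k}$ say. Since $u_{\nu}\to u$ in $L^{2}$, also $L_{k}u_{\nu}\to L_{k}u$ in $\mathcal{D}'(\Omega)$, so by uniqueness of distributional limits $L_{k}u=g_{k}\in L^{2}_{(0,q)}(\Omega)$, which is the first assertion of the lemma. Feeding $u_{\nu}$ into the same estimate and letting $\nu\to\infty$ (using that $\dbar u_{\nu}\to\dbar u$, $\dbar^{*}u_{\nu}\to\dbar^{*}u$, $L_{k}u_{\nu}\to L_{k}u$ in $L^{2}$, and the already-noted fact that the $\|f\|$ term can be discarded on a pseudoconvex domain) yields \eqref{Lest}.

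I expect the argument to be essentially bookkeeping once \eqref{EqnMaximalEst} is available; the only step needing any care is the simultaneous reduction to smooth forms lying in $dom(\dbar^{*})$ and supported in the chart, which is the routine preliminary whenever one wants to apply a maximal estimate to $L^{2}$ data. I also note that the comparable eigenvalues hypothesis enters at exactly one point --- to invoke \eqref{EqnMaximalEst} --- and no further information about $b\Omega$ is used.
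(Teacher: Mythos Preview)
Your proposal is correct and follows essentially the same approach as the paper's own proof: invoke the maximal estimate \eqref{EqnMaximalEst} for smooth forms, approximate $u$ in the graph norm of $\overline{\partial}\oplus\overline{\partial}^{*}$ by smooth forms supported in the same chart, use the estimate on differences to see that $\{L_{k}u_{\nu}\}$ is Cauchy in $L^{2}$, and identify the limit with the distributional derivative $L_{k}u$. The only minor difference is that you localize explicitly with a cutoff $\chi$, whereas the paper simply cites that the density argument in \cite{StraubeBook}, Proposition 2.3, already produces approximants supported in the given chart.
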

\begin{proof}
The comparable eigenvalues condition entails maximal estimates for $(0,q)$-forms, as in \eqref{EqnMaximalEst}. A little care is needed because \eqref{EqnMaximalEst} is an \emph{a priori} estimate: it is assumed that the form is smooth up to the boundary. However, forms smooth up to the boundary are dense in the graph norm of $\overline{\partial}\oplus\overline{\partial}^{*}$ (\cite{StraubeBook}, Proposition 2.3), and the proof shows that the approximation can be done with forms supported in the same boundary chart. If the approximating sequence is $\{u_{j}\}_{j=1}^{\infty}$, then \eqref{Lderivative} shows that $\{L_{k}u_{j}\}_{j=1}^{\infty}$ is Cauchy, hence converges, in $L^{2}_{(0,q)}(\Omega)$. But it also converges to $L_{k}u$ in the  sense of distributions. Therefore, $L_{k}u\in L^{2}_{(0,q)}(\Omega)$, and \eqref{Lest} holds.
\end{proof}
What is needed in the proofs of Theorems \ref{ThmHankel} and \ref{ThmComp} is contained in the next Lemma.
\begin{lemma}\label{mainlemma}
Assume that the Levi form of $b\Omega$ satisfies a comparable eigenvalues condition at level $q$ for some $q$ with $1\leq q\leq (n-1)$. Let $u \in dom(\overline{\partial})\cap dom (\overline{\partial}^{*})\subset L^{2}_{(0,q)}(\Omega)$ be supported in a special boundary chart. Then $u\wedge\overline{\omega}^{I} \in dom(\overline{\partial})\cap dom (\overline{\partial}^{*})$, and
\begin{equation}\label{starest}
 \|\dbar^*(u\wedge \ob_I)\| +\|\dbar(u\wedge \ob_I)\|
\lesssim  \|\dbar u\|+\|\dbar^* u\|\;, 
\end{equation}
for any multi--index $I$ of length less than or equal to $(n-1-q)$ that 
does not contain $n$.
\end{lemma}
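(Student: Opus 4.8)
\textbf{Plan for the proof of Lemma \ref{mainlemma}.}
The plan is to reduce everything to two facts: (a) the membership $u\wedge\ob_I\in dom(\dbar^*)$, and (b) the estimate \eqref{starest}, with the $\dbar$-part being routine and the $\dbar^*$-part being where the comparable eigenvalues hypothesis enters through Lemma \ref{Lderivative}. First I would observe that the claim is multiplicative in $I$: writing $I=(i_1,\ldots,i_r)$ with $r\leq n-1-q$ and all $i_t\neq n$, one has $u\wedge\ob_I=(\cdots((u\wedge\ob_{i_1})\wedge\ob_{i_2})\cdots)\wedge\ob_{i_r}$, and at each stage the form is a $(0,q+t)$-form supported in the same special boundary chart. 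Since a comparable eigenvalues condition at level $q$ implies one at every level $q'\geq q$ (see the footnote in the introduction, on $(\lambda_1+\cdots+\lambda_{q+1})$), Lemma \ref{Lderivative} applies at each intermediate level as long as $q+t\leq n-1$, which is guaranteed by $r\leq n-1-q$. So it suffices to treat a single wedge, $u\wedge\ob_k$ with $1\leq k\leq n-1$, and then iterate.

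For the single step, I would first record that $u\wedge\ob_k\in dom(\dbar)$ with $\dbar(u\wedge\ob_k)=\dbar u\wedge\ob_k+(-1)^q u\wedge\dbar\ob_k$; since $\ob_k$ and $\dbar\ob_k$ are smooth up to $\Dc$, this gives $\|\dbar(u\wedge\ob_k)\|\lesssim\|\dbar u\|+\|u\|\lesssim\|\dbar u\|+\|\dbar^*u\|$, the last step using that on a pseudoconvex domain $\|u\|\lesssim\|\dbar u\|+\|\dbar^*u\|$ for $(0,q)$-forms with $q\geq 1$ (the basic estimate). For the $\dbar^*$-side, the key point, as flagged in the introduction and the appendix preamble, is that $u\wedge\ob_k$ has vanishing normal component whenever $u$ does (because the normal component of $\ob_k$ vanishes on $b\Omega$ for $k\leq n-1$), hence $u\wedge\ob_k\in dom(\dbar^*)$; one should really argue this via the density of forms smooth up to the boundary in the graph norm (\cite{StraubeBook}, Proposition 2.3), approximating $u$ by smooth $u_j$ supported in the same chart, forming $u_j\wedge\ob_k$, and passing to the limit. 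The explicit formula for $\dbar^*(u\wedge\ob_k)$ is exactly \eqref{Eqn11} in this appendix (equation referenced as \eqref{Eqn11} in section \ref{SectionAppendix}): it shows $\dbar^*(u\wedge\ob_k)$ is a universal expression in $\dbar^*u$, the coefficients of $u$ themselves (with smooth-up-to-$\Dc$ coefficients, coming from $\dbar^*\ob_k$ and the structure functions), and the terms $L_k u_J$. Therefore $\|\dbar^*(u\wedge\ob_k)\|\lesssim\|\dbar^*u\|+\|u\|+\|L_k u\|$.

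The last term is controlled by Lemma \ref{Lderivative}: $\|L_k u\|\lesssim\|\dbar u\|+\|\dbar^*u\|$, which is precisely where the comparable eigenvalues condition (equivalently, the maximal estimate \eqref{EqnMaximalEst}) is used. Combining, $\|\dbar^*(u\wedge\ob_k)\|\lesssim\|\dbar u\|+\|\dbar^*u\|$, and with the $\dbar$-bound above this gives \eqref{starest} for a single wedge. Iterating $r\leq n-1-q$ times — at each stage the estimate picks up only a bounded constant depending on the chart and on $\ob$, and the intermediate forms stay in the chart and stay at a level $\leq n-1$ — yields the full statement. I expect the main obstacle to be purely bookkeeping rather than conceptual: deriving the exact formula \eqref{Eqn11} for $\dbar^*(u\wedge\ob_k)$ cleanly enough that the only ``bad'' (non-smooth-coefficient) term is $L_k u$, and then being careful that the a priori nature of \eqref{EqnMaximalEst} is handled by the density argument of Lemma \ref{Lderivative} at each level $q'$ with $q\leq q'\leq n-1$. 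One small point to get right is that when $|I|=n-1-q$ the top form $u\wedge\ob_I$ is a $(0,n-1)$-form and the basic estimate $\|\cdot\|\lesssim\|\dbar\cdot\|+\|\dbar^*\cdot\|$ still applies (it is valid for $1\leq q'\leq n-1$ on pseudoconvex domains), so no degeneration occurs at the top level.
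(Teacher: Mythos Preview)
Your plan is correct and follows essentially the same route as the paper: reduce to a single wedge $u\wedge\ob_k$, handle $\dbar$ by the product rule plus the basic estimate, derive the formula behind \eqref{Eqn11} to get $\|\dbar^*(u\wedge\ob_k)\|\lesssim\|\dbar^*u\|+\|L_ku\|+\|u\|$, invoke Lemma \ref{Lderivative} for the $L_ku$ term, and iterate using that the comparable eigenvalues condition persists to higher levels. The only minor difference is that the paper establishes membership in $dom(\dbar^*)$ directly via the weak formulation (pairing $u\wedge\ob_k$ against $\dbar g$ and bounding by $\|g\|$), whereas you sketch it via vanishing normal components plus density; both work.
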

Note that the issue is only with the first term on the left hand side of \eqref{starest}, the estimate for $\|\overline{\partial}(u\wedge\overline{\omega_{I}})\|$ is trivial, since $\|u\|\lesssim \|\overline{\partial}u\| + \|\overline{\partial}^{*}u\|$.
\begin{proof}[Proof of Lemma \ref{mainlemma}:]
We only need to prove the case $|I|=1$, i.e. $\overline{\omega_{I}}=\overline{\omega_{k}}$ for some $k$ with $1\leq k\leq (n-1)$; repeated application of this case then yields estimate \eqref{starest} for general $I$.

 \smallskip
 
First recall that the adjoint of wedging with $\overline{\omega_{k}}$ is essentially an interior product with $\overline{L_{k}}$. Indeed, if $v=\sideset{}{'}\sum_{|J|=q+1}v_{J}\overline{\omega_{J}}\in L^{2}_{(0,q+1)}(\Omega)$, then
 \begin{equation}\label{wedge}
 \langle u\wedge\overline{\omega_{k}}, v\rangle = \langle\sideset{}{'}\sum_{|K|=q}u_{K}(\overline{\omega_{K}}\wedge\overline{\omega_{k}}), \sideset{}{'}\sum_{|J|=q+1}v_{J}\overline{\omega_{J}}\;\rangle  
 = \sideset{}{'}\sum_{|K|=q}\langle u_{K}, v_{Kk}\rangle = \langle u, v_{k}\rangle \;,
 \end{equation}
where $v_{k}$ is the $(0,q)$-form $v_{k} = \sideset{}{'}\sum_{|K|=q}v_{Kk}\overline{\omega_{K}}$.
We have slightly abused notation: the various appearances of $\langle\cdot, \cdot\rangle$ denote the inner product between $(0,q+1)$-forms, functions, and $(0,q)$-forms, respectively. For $v=\overline{\partial}g$, \eqref{wedge} gives
 \begin{equation}\label{wedgeadjoint}
 \langle u\wedge\overline{\omega_{k}}, \;\overline{\partial}g\rangle = \langle u, (\overline{\partial}g)_{k}\rangle\;.
 \end{equation}

 \smallskip
 
Next, we compare $(\overline{\partial}g)_{k}$ to $\overline{\partial}(g_{k})$.\footnote{This amounts to a $\overline{\partial}$ version of the Cartan formula $i(X)d+di(X)=Lie_{X}$ (see for example \cite{Morita01}, Theorem 2.11). The difference in sign in \eqref{compare} below (i.e. $\overline{\partial}(g_{k})$ instead of $-\overline{\partial}(g_{k})$) results from the definition of $(\cdot)_{k}$, which corresponds to inserting $\overline{L_{k}}$ into the last slot rather than the first. This affects $(\overline{\partial}g)_{k}$ by a factor $(-1)^{q}$, and $\overline{\partial}(g_{k})$ by a factor of $(-1)^{q-1}$.} Assume for the moment that $g\in dom(\overline{\partial}^{*})$ is smooth up to the boundary, say 
$g=\sumprime_{|J|=q} g_J\ob_J$ in the local boundary frame (we do not assume that $g$ is supported in that chart). Then
$g_k=\sumprime_{|K|=q-1} g_{Kk}\ob_K\;$. Also
\begin{multline}\label{dbarsub}
(\dbar g)_k=\sumprime_{|J|=q} \sum_{m=1}^n\left( (\overline{L}_m g_J)
\ob_m\wedge \ob_J\right)_k + O(|g|)\\
=\sumprime_{|K|=q-1} \sum_{m=1}^n(\overline{L}_m g_{Kk})(\ob_m\wedge \ob_{Kk})_{k}
+(-1)^q\sumprime_{|J|=q,k\not\in J} (\overline{L}_k g_J) \ob_J
+O(|g|)\;.
\end{multline}
The first sum in the second line results from those $J$ with $k\in J$. Note that
$(\overline{\omega_{m}}\wedge\overline{\omega_{Kk}})_{k} = \overline{\omega_{m}}\wedge\overline{\omega_{K}}$ when $m\not= k$\footnote{Denote by $S$ the increasing reordering of $(m, k_{1}, \cdots, k_{q-1})$. 
Then, when $m\not= k$, $(\overline{\omega_{m}}\wedge\overline{\omega_{Kk}})_{k}=(\epsilon^{S}_{(m,k_{1},\cdots,k_{q-1})}
\overline{\omega_{Sk}}\,)_{k}=\epsilon^{S}_{(m,k_{1},\cdots,k_{q-1})}\overline{\omega_{S}}=\overline{\omega_{m}}\wedge\overline{\omega_{K}}\,$.}. When $m=k$, the term vanishes. Therefore, modulo terms that are $O(|g|)$, this sum contains all the terms of $\overline{\partial}(g_{k})$, except $\sideset{}{'}\sum_{|K|=q-1}\overline{L_{k}}g_{Kk}(\overline{\omega_{k}}\wedge\overline{\omega_{K}})$. In other words,
\begin{equation}\label{dbarsub2}
 \sumprime_{|K|=q-1} \sum_{m=1}^n(\overline{L}_m g_{Kk})(\ob_m\wedge \ob_{Kk})_{k} = \overline{\partial}\big(g_{k}\big) + (-1)^{q} \sideset{}{'}\sum_{|K|=q-1}\overline{L_{k}}g_{Kk}(\overline{\omega_{K}}\wedge\overline{\omega_{k}}) + O(|g|)\; ,
\end{equation}
where we have used that $\overline{\omega_{k}}\wedge\overline{\omega_{K}} = (-1)^{q-1}\overline{\omega_{K}}\wedge\overline{\omega_{k}}$. Taking into account that the sum on the right hand side corresponds to the last sum in \eqref{dbarsub}, but for those multi--indices $J$ that contain $k$, \eqref{dbarsub} and \eqref{dbarsub2} combine to give the comparison we seek, namely
\begin{equation}\label{compare}
 (\overline{\partial}g)_{k} = \overline{\partial}(g_{k}) + (-1)^{q}\sumprime_{|J|=q} (\overline{L}_k g_J) \ob_J + O(|g|)\; .
\end{equation}

Let now $u\in dom(\overline{\partial}^{*})$ also be smooth up to the boundary. In view of \eqref{wedgeadjoint} and \eqref{compare}, we have
\begin{multline}\label{Eqn10}
\langle u\wedge\ob_k,\dbar g \rangle = \langle u, (\overline{\partial}g)_{k}\rangle \\ 
= \big\langle \;u, \;\dbar (g_k) 
+ (-1)^q\sumprime_{|J|=q} (\overline{L}_k g_J) \ob_J \;
 \big\rangle+ O(\|u\|\|g\|)\\
=\langle \dbar^*u,g_k\rangle 
+(-1)^{q+1}\sumprime_{|J|=q} \langle \; L_k u,g_J \overline{\omega_{J}}\;\rangle 
+ O(\|u\|\|g\|)\;.
\end{multline}
In the last step, we have used that $1\leq k\leq (n-1)$ to integrate $\overline{L_{k}}$ by parts without boundary term.
Approximating $g$ in the graph norm of $\overline{\partial}$ (\cite{StraubeBook}, Proposition 2.3) shows that equality of the left hand side with the right hand side in \eqref{Eqn10} remains valid if $g$ is only assumed in $dom(\overline{\partial})$. This in turn shows 
that $(u\wedge\overline{\omega_{k}})\in dom(\overline{\partial}^*)$ and that
\begin{equation}\label{Eqn11}
\|\dbar^*(u\wedge \ob_k)\| 
\lesssim \|\dbar^* u\|+\|L_ku\|+\|u\|\;.
\end{equation}
This estimate is for $u$ smooth up to the boundary. But if we now approximate a nonsmooth $u$ in the graph norm of $\overline{\partial}\oplus\overline{\partial}^{*}$ by forms smooth up to the boundary (and supported in the same boundary chart, as above) and invoke Lemma \ref{Lderivative}, we find that $(u\wedge\overline{\omega_{k}})\in dom(\overline{\partial}^{*})$ and the estimate
\eqref{Eqn11} still holds. By \eqref{Lest} and the obvious estimate for $\|u\|$, we thus obtain
\begin{equation}\label{final1}
\|\overline{\partial}^{*}(u\wedge\omega_{k})\| \lesssim \|\overline{\partial}u\| \\ + \|\overline{\partial}^{*}u\| \;.
\end{equation}
Finally, as noted above, we also have 
\begin{equation}\label{final2}
\|\dbar (u\wedge \ob_k)\| \lesssim \|\dbar u\|+\|u\| \lesssim \|\dbar u\|+\|\dbar^* u\| \;.
\end{equation}
Estimates \eqref{final1} and \eqref{final2} give \eqref{starest} when $|I|=1$. The general case now follows inductively.
\end{proof}

\smallskip

\emph{Acknowledgement:} The authors are grateful to the referee and to the editor for various helpful comments. In particular, the referee pointed out the remark at the end of Section 4.

\providecommand{\bysame}{\leavevmode\hbox to3em{\hrulefill}\thinspace}

\end{document}